\documentclass[a4paper,11pt]{amsart}
\usepackage{amsmath,amssymb,amsbsy,bbm,mathtools,nicefrac}
\usepackage{geometry}
\usepackage{tikz}
\usetikzlibrary{arrows.meta,calc,positioning}
\usepackage[colorlinks=true,allcolors=black]{hyperref}
\DeclareMathAlphabet\mathbfcal{OMS}{cmsy}{b}{n}
\usepackage{enumitem}

\newcommand{\bE}{\mathbb{E}}
\renewcommand{\Pr}{\mathbb{P}}
\newcommand{\bR}{\mathbb{R}}

\newcommand{\bF}{\mathbb{F}}

\newcommand{\cF}{\mathcal{F}}

\newcommand{\cB}{\mathcal{B}}
\newcommand{\cT}{\mathcal T}
\newcommand{\cD}{\mathcal D}
\newcommand{\cK}{\mathcal K}
\newcommand{\cR}{\mathcal R}
\newcommand{\cS}{\mathcal S}
\newcommand{\cV}{\mathcal V}

\newcommand{\cC}{\mathcal C}
\newcommand{\cP}{\mathcal P}
\newcommand{\cJ}{\mathcal J}

\newcommand{\mX}{\boldsymbol X}
\newcommand{\mG}{\boldsymbol G}

\newcommand{\mR}{\boldsymbol R}
\newcommand{\mP}{\boldsymbol P}
\newcommand{\mPi}{\boldsymbol\Pi}
\newcommand{\mGamma}{\boldsymbol\Gamma}

\newcommand{\U}[2][]{{{#1}\uparrow{#2}}}
\newcommand{\UD}[3][]{{{#1}\!\underset{#3}{\uparrow}\!{#2}}}
\newcommand{\taua}{\tau_{\U{a}}}

\newcommand{\D}[2][]{{{#1}\downarrow{#2}}}
\newcommand{\DU}[3][]{{{#1}\!\overset{#2}{\downarrow}\!{#3}}}

\newcommand{\taub}{\tau_{\D{b}}}

\newcommand{\rhob}{\rho_{\D{b}}}\renewcommand{\rhob}{\rho_b}
\newcommand{\rhov}{\rho_{\D{v}}}\renewcommand{\rhov}{\rho_v}

\newcommand{\va}{\boldsymbol a}

\newcommand{\cEll}{\mathcal L}

\newcommand{\ve}{\boldsymbol e}

\newcommand{\vc}{\boldsymbol c}
\newcommand{\vi}{\boldsymbol 1}
\newcommand{\indF}[1]{\mathbbm{1}\left\{#1\right\}} 
\newcommand{\vlambda}{\boldsymbol \lambda}
\newcommand{\vp}{\boldsymbol p}

\newcommand{\vz}{\boldsymbol z}
\newcommand{\vgamma}{\boldsymbol \gamma}

\newcommand{\mA}{\boldsymbol A}
\newcommand{\mB}{\boldsymbol B}
\newcommand{\mE}{\boldsymbol E}

\newcommand{\mH}{\boldsymbol H}

\newcommand{\mDelta}{\boldsymbol \Delta}
\newcommand{\mEps}{\mathbfcal{E}}
\newcommand{\mId}{\boldsymbol I}
\newcommand{\mQ}{\boldsymbol Q}
\newcommand{\mU}{\boldsymbol U}

\newcommand{\mZ}{{\boldsymbol Z}}
\newcommand{\ZU}[2]{\mZ(\U[#1\!]{#2})}

\newcommand{\ZOx}{\ZU 0 x}

\newcommand{\mC}{{\boldsymbol C}}
\newcommand{\mF}{{\boldsymbol F}}
\newcommand{\mK}{{\boldsymbol K}}

\newcommand{\Exp}{\text{Exp}}

\newcommand{\uX}{\underline X}

\newcommand{\upord}{\mathord{\upharpoonright}}

\theoremstyle{plain}
\newtheorem{theorem}{Theorem} [section]
\newtheorem{corollary}[theorem]{Corollary}
\newtheorem{lemma}[theorem]{Lemma}
\newtheorem{remark}[theorem]{Remark}
\newtheorem{proposition}[theorem]{Proposition}

\theoremstyle{definition}

\title[Exit problems for Markov-modulated AIMD processes]{Exit problems for additive-increase and multiplicative-decrease Markov-modulated processes}
\author{Bernardo D'Auria}
\address{Department of Mathematics “Tullio Levi- Civita”, University of Padua, Italy}
\email{dauria@math.unipd.it}
\author[Z. Palmowski]{Zbigniew Palmowski}
\address{Faculty of Pure and Applied Mathematics,
Wroc\l aw University of Science and Technology,
Wyb. Wyspia\'nskiego 27, 50-370 Wroc\l aw, Poland}
\email{zbigniew.palmowski@pwr.edu.pl}
\thanks{Zbigniew Palmowski was partially supported by the National Science Centre (Poland): grant 2023/51/B/ST1/01270. Bernardo D'Auria is a member of the Gruppo Nazionale Calcolo Scientifico-Istituto Nazionale di Alta Matematica (GNCS-INdAM) and acknowledges the financial support by the Italian SID project BIRD239937/23.}
\date{\today}

\begin{document}
\begin{abstract}
We study one-sided, two-sided and reflected exit problems for a finite-state Markov-modulated additive-increase and multiplicative-decrease process. 
For upward passage, a first-jump decomposition and a spatial Laplace transform lead to a recursion indexed by finite words of phases and to a convergent Green series. 
For downward passage, the first-jump operator is a strict contraction 
and the known contribution from $[p_i b,b]$ becomes an explicit boundary forcing term in the same spatial recursion.
These allow to derive the corresponding Green representation of the first passage matrix 
and its finite-dimensional compatibility condition.
The derivation of the two-sided exit identity is based on the Markov property, which in this context is equivalent to the cocycle property of the Laplace transform, together with the recursive equation at the upper level. 
For reflection at the running infimum, a finite $\bar p$-geometric method of steps realizes a certain Picard map that allows to find the exit matrix. 
As a result, we construct a Doob martingale related to the first passage time of the reflected Markov-modulated AIMD.
\end{abstract}
\maketitle

\section{Introduction}\label{sec:introduction}
Additive-increase and multiplicative-decrease (AIMD) processes form a particularly tractable class of piecewise deterministic Markov processes.
Between jump epochs the level increases linearly, while at a jump it is multiplied by a factor in $(0,1)$.
This simple path structure already produces nonlocal exit equations, because the state after a jump is a fixed fraction of the level reached immediately before the jump.
These processes can also be viewed as a basic model of partial stochastic resetting, and related mechanisms arise in search processes (see, e.g., \cite{Bel, Evans, White}), physics (see, e.g., \cite{18}), biology (see, e.g., \cite{4}), and queueing theory (see, e.g., \cite{MR2576022}); see also the survey~\cite{Gupta}.
In the context of processes with resetting, two key quantities of interest are the distributions of exit times from half-lines or intervals (see, e.g., \cite{Ralf}) and the stationary distribution (see, e.g., \cite{my}).
For scalar AIMD processes, one-sided and two-sided exit transforms can be expressed through scale-type functions and recursive identities; see~\cite{vanderhofstad2023}. Here we extend this framework to a finite-state Markov-modulated setting, where the interaction between the level process and the environmental phase gives rise to a richer structure.

We develop a unified approach to exit problems for a finite-state Markov-modulated AIMD process.
The modulation affects the process at the same epochs at which the multiplicative decreases occur.
In other words, if the phase immediately before a jump is $i$, then the level is multiplied by $p_i$ and the post-jump phase is $j$ with probability $q_{ij}$.
Between jumps, the level grows linearly at a rate determined by the environmental process~$J$.
The passage from the scalar to the Markov-modulated setting is not merely a matrix-valued reformulation.
Phase-dependent multiplicative factors produce different spatial dilations according to the successive phases visited by the process, and the resulting exit equations acquire a recursive structure indexed by finite phase words.

Exit transforms are naturally matrix-valued. Their $(i,j)$ entries record the initial phase $i$ and the phase $j$ at the relevant passage epoch. Matrix multiplication then becomes the exact algebraic expression of the strong Markov property when a trajectory is decomposed at an intermediate passage level and the intermediate phase is summed out.

The exit problems considered here are governed by two different renewal mechanisms, according to which information must be retained after a path decomposition. 

For ordinary upward passage the renewal structure is spatial: since the process has no upward jumps (\emph{upward skip-free}), every trajectory reaching a higher level must pass continuously through each intermediate level, and the strong Markov property therefore factorizes passage matrices across such levels. Combining this factorization with a first-jump decomposition couples the spatial Laplace transform at $s$ to its value at $s/p_i$. Each iteration selects the phase responsible for the next dilation, so successive iterations are naturally indexed by finite phase words. We identify a one-step spatial operator~$\cK$, whose resolvent $\cR=(I-\cK)^{-1}$ sums the contributions generated by all successive iterations of $\cK$, yielding the corresponding Green-series representation.

The same spatial resolvent also governs downward passage.
The lower boundary produces an additional term determined by the prescribed boundary values.
After this boundary term is separated from the unknown part, the remaining transform satisfies the same spatial recursion as in the upward problem.
The downward transform can therefore be represented through the same operator~$\cR$, while the remaining unknown is determined by a finite-dimensional compatibility condition.
The same framework also allows us to address the two-sided exit problem, which introduces additional difficulties due to the simultaneous presence of lower and upper boundaries.

The second renewal mechanism arises for reflection at the running infimum.
When a downward jump creates a new running minimum $y$, the reflected coordinate is reset to zero, but the absolute level $y$ and the new phase remain relevant for the future evolution. 
This leads to a restart formulation in which the running minimum is retained as part of the state. The resulting operator is a strict contraction, which yields existence and uniqueness of the continuation value. 
A finite geometric method of steps then gives a constructive representation of the reflected first-passage transform and the associated Doob martingale.

The paper is organized as follows. Section~\ref{sec:model} introduces the Markov-modulated AIMD process and the matrix convention. Section~\ref{sec:upward} derives the upward first-step equation and the common spatial resolvent and its phase-word expansion. Section~\ref{sec:downward} establishes the downward first-step contraction and derives the boundary forcing, the Green representation, and the finite-dimensional closure. Section~\ref{sec:two-sided} develops two-sided exit identities and a constructive continuation in the upper level. Section~\ref{sec:reflected} treats reflection at the running infimum through a strict-contraction restart equation and constructs the associated Doob martingale.

\section{Model and notation}\label{sec:model}
Let $E=\{1,\ldots,m\}$, and assume throughout that $c_i>0$, $\lambda_i>0$ and $p_i\in(0,1)$ for every~$i\in E$, and that $\mQ=(q_{ij})_{i,j\in E}$ is stochastic.
Set $\bar p=\max_{i\in E} p_i<1$, $\bar \lambda=\max_{i\in E} \lambda_i$ and $\bar c=\max_{i\in E} c_i$.
We consider the Markov process $\mX=(X,J)$ on $\bR_+\times E$.
Conditional on the pre-jump phase $i$, the level increases at velocity $1/c_i$ and the next jump occurs at rate $\lambda_i$.
At such a jump,
$$
(x,i)\longmapsto(p_i x,j)\qquad\hbox{with probability }q_{ij}.
$$
Thus, if $T_n$ denotes the $n$-th jump epoch, then $J$ is constant and $X$ is affine on $[T_n,T_{n+1})$, with slope $1/c_{J(T_n)}$.
At $T_{n+1}$ the multiplicative factor is determined by the pre-jump phase and the new phase is sampled according to $\mQ$; see Figure~\ref{fig:MMaimd}.
Since the jump rates are bounded on the finite phase space, only finitely many jumps occur on every compact time interval almost surely.

\begin{figure}[t]
\centering
\begin{tikzpicture}[x=1.05cm,y=0.62cm,>=Latex,font=\small]
\draw[->] (0,0) -- (11.4,0) node[right] {$t$};
\draw[->] (0,0) -- (0,6.0) node[above] {$X(t)$};
\foreach \xa/\xb/\lab in {0/2.1/1,2.1/4.0/2,4.0/6.0/3,6.0/7.9/2,7.9/9.6/1,9.6/11.0/3}{
  \ifnum\lab=1 \fill[gray!8] (\xa,0) rectangle (\xb,5.7); \fi
  \ifnum\lab=2 \fill[gray!16] (\xa,0) rectangle (\xb,5.7); \fi
  \ifnum\lab=3 \fill[gray!24] (\xa,0) rectangle (\xb,5.7); \fi
  \node at ({(\xa+\xb)/2},5.35) {$J(t)=\lab$};
}
\coordinate (A0) at (0,0.35); \coordinate (A1) at (2.1,3.0);
\coordinate (B1) at (2.1,1.25); \coordinate (B2) at (4.0,2.7);
\coordinate (C2) at (4.0,0.95); \coordinate (C3) at (6.0,4.25);
\coordinate (D3) at (6.0,1.55); \coordinate (D4) at (7.9,3.15);
\coordinate (E4) at (7.9,1.35); \coordinate (E5) at (9.6,3.65);
\coordinate (F5) at (9.6,1.75); \coordinate (F6) at (11.0,2.85);
\draw[black,thick] (A0)--(A1);
\draw[black,thick,densely dashed] (B1)--(B2);
\draw[black,thick,dash dot] (C2)--(C3);
\draw[black,thick,densely dashed] (D3)--(D4);
\draw[black,thick] (E4)--(E5);
\draw[black,thick,dash dot] (F5)--(F6);
\foreach \x/\yt/\yb/\pi in {2.1/3.0/1.25/p_1,4.0/2.7/0.95/p_2,6.0/4.25/1.55/p_3,7.9/3.15/1.35/p_2,9.6/3.65/1.75/p_1}{
  \draw[dashed,gray] (\x,0)--(\x,\yt);
  \draw[densely dashed] (\x,\yt)--(\x,\yb);
  \fill (\x,\yt) circle (1.5pt); \draw[fill=white] (\x,\yb) circle (1.8pt);
  \node[right] at (\x, {0.6*(\yt+\yb)}) {$\cdot\,\pi$};
}
\node[below right] at (0.65,1.25) {$1/c_1$};
\node[below right] at (2.65,1.65) {$1/c_2$};
\node[below right] at (4.55,1.85) {$1/c_3$};
\end{tikzpicture}
\caption{A sample path of the Markov-modulated AIMD process. 
While the process is in phase i, the level has slope $1/c_i$; 
at a jump it is multiplied by $p_i$, after which the new phase is sampled according to $\mQ$.}
\label{fig:MMaimd}
\end{figure}

If $N$ denotes the jump-counting process, the level coordinate may equivalently be written in differential form as
\begin{equation}\label{model.SDE}
dX(t)=\frac{1}{c_{J(t)}}dt+(p_{J(t-)}-1)X(t-)\,dN(t),
\end{equation}
with $N$ having predictable intensity $\lambda_{J(t-)}$.
We use the completed right-continuous natural filtration $\bF=(\cF_t)_{t\ge0}$ and write $\Pr_{x,i}$ and $\bE_{x,i}$ for probability and expectation conditional on $(X(0),J(0))=(x,i)$.
The vector $\ve_i$ denotes the $i$-th canonical row vector and $\vi$ the column vector of ones.
For a matrix $\mG=(G_{ij})$, we define the following submultiplicative norm
$$
|\mG|=\max_i\sum_j|G_{ij}|,
$$
and, fixed $b>0$, we define $\cB_b$ as the complete metric space of bounded matrix-valued functions $\mG$ on $[0,\infty)$ such that $\mG(x)=\mId$ for $x\le b$, endowed with the supremum row-sum norm
$$
\|\mG\|_\infty = \sup_{x\ge0}|\mG(x)| .
$$
We write $\mDelta(\va)$ for the diagonal matrix with diagonal vector $\va$, $\mEps(\va)=\mDelta(e^{\va})$ and $\mE_i=\Delta(\ve_i)$. Products and quotients of vectors are componentwise.
Matrix multiplication has the usual probabilistic interpretation.
Throughout $w>0$ and $z_i=c_i(\lambda_i+w)$, $\gamma_i=c_i\lambda_i/p_i$, so that
$$
\vz=\vc(\vlambda+w),\qquad \vgamma=\frac{\vc\vlambda}{\vp}.
$$
We set $\bar z=\max_{i\in E} z_i$ and $\bar \gamma=\max_{i\in E}\gamma_i$.
For $a\ge b\ge0$, define the first entrance times
$$
\taua=\inf\{t\ge0:X(t)\ge a\},\qquad \taub=\inf\{t\ge0:X(t)\le b\}.
$$
Since upward motion is continuous~(\emph{upward skip-free}), $X(\taua)=a$ on $\{\taua<\infty\}$. Downward passage, by contrast, occurs by a multiplicative jump and generally undershoots a fixed lower level $b$.


\section{The upward one-sided problem}\label{sec:upward}
We define one-sided discounted passage matrices by
\begin{equation}\label{Z}
\mZ_{ij}(w;\U[x]{a})=\bE_{x,i}[e^{-w\taua};J(\taua)=j],\qquad \mZ_{ij}(w;\D[x]{b})=\bE_{x,i}[e^{-w\taub};J(\taub)=j].
\end{equation}
These matrices are substochastic in the row-sum sense.

Our first main aim is find $\mZ_{ij}(w;\U[x]{a})=\bE_{x,i}[e^{-w\taua};J(\taua)=j]$. 
The key observation is that a passage from $x$ to $a$ can be decomposed at any intermediate level $y$, with a sum over the phase attained at $y$.
This absence of upward jumps giving a multiplicative passage identity is illustrated in Figure~\ref{fig:markov-factorization}
and formalised below.

\begin{theorem}[Upward passage]\label{thm:upward}
$\ZOx$ is invertible for every $x\ge0$.
Further, for $0\le x\le a$,
the one-sided discounted passage matrix satisfies the \emph{cocycle} identity
\begin{equation}\label{upward.cocycle.identity}
\mZ(\U[x]{a})=\mZ^{-1}(\U[0]{x})\mZ(\U[0]{a}).
\end{equation}
\end{theorem}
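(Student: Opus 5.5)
Our plan is to first establish the multiplicative passage identity via the strong Markov property, and then deduce invertibility from a perturbation argument near the diagonal.

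\emph{Step 1: the factorization.} Fix $0\le y\le a$ and a starting point $(x,i)$ with $x\le y$. Since the process has no upward jumps, every path reaching level $a$ must first hit $y$ exactly, so $\tau_{\U y}\le\taua$ and $X(\tau_{\U y})=y$ on $\{\taua<\infty\}$. We apply the strong Markov property at the stopping time $\tau_{\U y}$. The additivity $\taua=\tau_{\U y}+\taua\circ\theta_{\tau_{\U y}}$ then gives
\begin{equation*}
\mZ_{ij}(\U[x]{a})=\sum_{k\in E}\bE_{x,i}\bigl[e^{-w\tau_{\U y}};J(\tau_{\U y})=k\bigr]\,\mZ_{kj}(\U[y]{a}),
\end{equation*}
that is, $\mZ(\U[x]{a})=\mZ(\U[x]{y})\mZ(\U[y]{a})$. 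Taking $x=0$ and $y=x$ yields $\mZ(\U[0]{a})=\mZ(\U[0]{x})\mZ(\U[x]{a})$. Once $\ZOx$ is shown to be invertible, left multiplication by its inverse gives \eqref{upward.cocycle.identity}.

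\emph{Step 2: invertibility for small $x$.} For $x$ small, we show $\ZOx$ is close to $\mId$. Starting from $(0,i)$, if no jump occurs before time $c_ix$, the process reaches $x$ at time $c_ix$ still in phase $i$. Hence
\begin{equation*}
\mZ_{ii}(\U[0]{x})\ge e^{-(w+\lambda_i)c_ix}=e^{-z_ix},
\end{equation*}
while the off-diagonal row sum is at most $\Pr_{0,i}(\text{a jump before }c_ix)=1-e^{-\lambda_ic_ix}$. Therefore
\begin{equation*}
|\mId-\ZOx|\le \max_i\bigl[(1-e^{-z_ix})+(1-e^{-\lambda_ic_ix})\bigr]\le 2(1-e^{-\bar zx}),
\end{equation*}
which is $<1$ for $x<\delta:=\log 2/\bar z$. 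The Neumann series then shows that $\ZOx$ is invertible for $x\in[0,\delta)$, and $\mZ(\U[0]{0})=\mId$ trivially. The same bound applies uniformly to any increment: $|\mId-\mZ(\U[y]{y+h})|\le 2(1-e^{-\bar zh})$ for every starting level $y$. This holds because the estimate used only the first deterministic climb, whose duration from $y$ to $y+h$ in phase $i$ is $c_ih$ regardless of $y$.

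\emph{Step 3: extension to all $x$.} For general $x$, we partition $0=y_0<y_1<\dots<y_n=x$ with mesh $<\delta$. Iterating Step 1 gives
\begin{equation*}
\ZOx=\prod_{k=1}^n \mZ(\U[y_{k-1}]{y_k}),
\end{equation*}
and each factor is invertible by the uniform estimate, so the product is invertible. We expect the main obstacle to be the care needed in Step 1: justifying the strong Markov decomposition and the identity $X(\tau_{\U y})=y$ from upward skip-freeness, and handling the event $\{\tau_{\U y}<\infty,\ \taua=\infty\}$, which contributes zero consistently on both sides. The invertibility argument itself is elementary once the uniform small-increment bound is in place.
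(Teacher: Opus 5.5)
Your proof is correct and follows the same route as the paper. You factorize by the strong Markov property at $\tau_{\U{x}}$, show that the local passage matrices $\mZ(\U[y]{y+h})$ are invertible for small $h$ uniformly in $y$, and partition $[0,x]$ to write $\ZOx$ as a finite product. Your explicit bound $|\mId-\mZ(\U[y]{y+h})|\le 2(1-e^{-\bar z h})<1$ for $h<\log 2/\bar z$, with the Neumann series, makes the paper's $\mId+{\rm O}(h)$ estimate quantitative.
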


\begin{figure}[t]
\centering
\begin{tikzpicture}[x=0.95cm,y=0.68cm,>=Latex,font=\small]
\draw[->] (0,0)--(10.2,0) node[right] {$t$};
\draw[->] (0,0)--(0,4.8) node[above] {$X(t)$};
\draw[dashed,gray] (0,2.35)--(9.6,2.35); \node[left] at (0,2.35) {$x$};
\draw[dashed,gray] (0,4.05)--(9.6,4.05); \node[left] at (0,4.05) {$a$};
\draw[thick] (0,0.25)--(1.4,1.35)--(1.4,0.8)--(3.4,2.35)--(3.4,1.65)--(5.3,3.0)--(5.3,2.05)--(7.2,3.35)--(7.2,2.75)--(9.35,4.05);
\fill[black] (3.4,2.35) circle (2.2pt); \draw[black,dashed] (3.4,0)--(3.4,2.35);
\fill[black] (9.35,4.05) circle (2.2pt); \draw[black,dashed] (9.35,0)--(9.35,4.05);
\node[below] at (3.4,0) {$\tau_{\U{x}}$};
\node[below] at (9.35,0) {$\tau_{\U{a}}$};
\node at (1.9,2.75) {$\mZ(\U[0]{x})$};
\node at (7.25,3.78) {$\mZ(\U[x]{a})$};
\end{tikzpicture}
\caption{Strong-Markov factorization of the upward passage at an intermediate level $x$. The phase attained at $\tau_{\U{x}}$ is summed by matrix multiplication.}
\label{fig:markov-factorization}
\end{figure}

\begin{proof}
For $0<x<a$, the strong Markov property at $\tau_{\U{x}}$ gives, componentwise,
$$
\begin{aligned}
\mZ_{ij}(\U[0]{a})
&=\sum_{k=1}^m\bE_{0,i}[e^{-w\tau_{\U{x}}};J(\tau_{\U{x}})=k]\,
\bE_{x,k}[e^{-w\taua};J(\taua)=j]\\
&=\sum_{k=1}^m\mZ_{ik}(\U[0]{x})\mZ_{kj}(\U[x]{a}).
\end{aligned}
$$
Hence, $\mZ(\U[0]{a})=\ZOx\mZ(\U[x]{a})$ and \eqref{upward.cocycle.identity} follows.

To show that $\ZOx$ is invertible, note that the matrix valued function $a\rightarrow \mZ(\U[x]{a})$ is continuous and hence its determinant is continuous as well.
Moreover, for $h\downarrow0$, the probability of a jump before the deterministic climb from $x$ to $x+h$ is asymptotically equal to $\lambda h$ and we have $\mZ(\U[p_ix]{x+h})\leq 1$.
Therefore $\mZ(\U[x]{x+h})=\mId +{\rm O}(h)$ as $h\downarrow0$.
Thus the local passage matrices $\mZ(\U[x]{x+h})$ are all invertible for all $x\geq 0$ and sufficiently small level increment $h>0$.
A finite partition of $[0,nh]$ and the preceding cocycle identity gives that the matrix $\mZ(\U[0]{nh})=\prod_{k=0}^{n-1}\mZ(\U[kh]{(k+1)h})$ is invertible
which is equivalent that $\ZOx$ is invertible.
\end{proof}

To obtain a representation that can be iterated, we now condition on the first jump epoch $T$ when the process $(X,J)$  starts from $(0,i)$. If $T>c_i a$, the process reaches $a$ without a jump. If $T\le c_i a$, the pre-jump level is $T/c_i$ and the post-jump level is $p_iT/c_i$. Therefore
$$
\begin{aligned}
\mZ_{ij}(\U[0]{a})
={}&e^{-z_i a}\delta_{ij}
+\int_0^{c_i a}\lambda_i e^{-(w+\lambda_i)t}
\sum_{k=1}^m q_{ik}\mZ_{kj}(\U[p_it/c_i]{a})\,dt\\
={}&e^{-z_i a}\delta_{ij}
+\int_0^{p_i a}\gamma_i e^{-(z_i/p_i)y}
\sum_{k=1}^m q_{ik}\mZ_{kj}(\U[y]{a})\,dy.
\end{aligned}
$$
Using Theorem~\ref{thm:upward} in the integrand and setting $\mU(a)=\mZ^{-1}(\U[0]{a})$, we obtain
\begin{equation}\label{eq:upU}
\mId=\mEps(-a\vz)\mU(a)+\mDelta(\vgamma)
\int_0^\infty\mDelta(y\le\vp a)\mEps(-(\vz/\vp)y)\mQ\mU(y)\,dy.
\end{equation}
This identity is the matrix analogue of the scalar renewal equation for the reciprocal upward scale function, compare with~\cite[Equation~(2.4)]{vanderhofstad2023}.
We briefly justify the Laplace transform used below.
For $h>0$ small, uniformly in the starting level $x$, the probability of a jump before the deterministic climb from $x$ to $x+h$ is at most $1-e^{-\bar \lambda \bar c h}=O(h)$.
Hence, the local passage matrix $\mZ(\U[x]{x+h})$ differs from $\mId$ by $O(h)$ uniformly in $x$.
Choosing $h$ small enough, its inverse has norm at most $1+C_0h$, for some constant $C_0$.
By the cocycle property and a partition of $[0,a]$ into intervals of length at most $h$, there are constants $C_1,\kappa<\infty$ such that $|\mU(a)|\le C_1e^{\kappa a}$, $a\ge0$.
Thus, the following Laplace transform is well defined for every real~$s>\kappa$:
$$
\widetilde\mU(s)=\int_0^\infty e^{-sa}\mU(a)\,da.
$$
Multiplying \eqref{eq:upU} by $e^{-sa}\mEps(a\vz)$, integrating over $a$ and reversing the order of integration in the second term gives
$$
\mId=\mDelta(s-\vz)\widetilde\mU(s)
+\mDelta(\vgamma)\sum_{i=1}^m\mE_i\mQ\widetilde\mU(s/p_i).
$$
Equivalently,
\begin{equation}\label{eq:uprec}
\widetilde\mU(s)=\mA(s)\mB+\mA(s)\sum_{i=1}^m\mE_i\mQ\widetilde\mU(s/p_i),
\end{equation}
with
$$
\mA(s)=\mDelta\left(\frac{\vgamma}{\vz-s}\right),\qquad \mB=-\mDelta^{-1}(\vgamma).
$$

The form of~\eqref{eq:uprec} determines the appropriate indexing of its iterates. Each application of the second term selects a phase $i$, adds the transition $\mE_i\mQ$, and replaces the Laplace argument $s$ by $s/p_i$. After successive choices $i_1,\ldots,i_n$, the argument is $s/(p_{i_1}\cdots p_{i_n})$, so an $n$-fold iterate is naturally indexed by the corresponding phase word. Let $E^*=\bigcup_{n\ge0}E^n$ be the set of finite phase words, including the empty word $\varnothing$. For $\alpha=(i_1,\ldots,i_n)\in E^n$ define the prefix $\alpha\upord_k=(i_1,\ldots,i_k)$, $0\le k\le n$, with $\alpha\upord_0=\varnothing$, and set $p_\varnothing=1$ and $p_\alpha=\prod_{\ell=1}^{|\alpha|}p_{i_\ell}$.
For a nonempty phase word $\alpha=(i_1,\ldots,i_n)$ define
\begin{equation}\label{eq:Kword}
\mK_\alpha(s)=\mA(s)\mE_{i_1}\mQ\prod_{k=2}^{|\alpha|}\mA(s/p_{\alpha\upord_{k-1}})\mE_{i_k}\mQ,
\end{equation}
where the product is ordered from left to right in increasing $k$, with the empty product interpreted as $\mId$.
For a matrix-valued function $\mG$ define the dilation--transition operator and the one-step spatial operator by
\begin{equation}\label{eq:spatialoperators}
(\cS\mG)(s)=\sum_{i\in E}\mE_i\mQ\mG(s/p_i),
\qquad
(\cK\mG)(s)=\mA(s)(\cS\mG)(s).
\end{equation}

The spatial resolvent is
\begin{equation}\label{eq:Doperator}
\cR=(I-\cK)^{-1}=\sum_{n\ge0}\cK^n,
\qquad
(\cR \mF)(s)=\mF(s)+\sum_{n\ge1}\sum_{|\alpha|=n}\mK_\alpha(s)\mF(s/p_\alpha),
\end{equation}
whenever the series is justified. Here the phase words $\alpha$ arise naturally from expanding the powers of $\cK$: each application of $\cK$ selects one phase, so that $\cK^n$ is indexed by words of length~$n$.

The possible poles generated by the iterated factors $\mA(s/p_\alpha)$ occur at the countable set
$$
\cP=\{z_jp_\alpha:j\in E,\ \alpha\in E^*\}\subset(0,\bar z].
$$
\begin{proposition}[Green resolution of the spatial recursion]\label{prop:green.resolution}
Let $\mF$ be bounded on $(0,\infty)$.
For every real $s>0$ with $s\notin\cP$, the series defining $(\cR\mF)(s)$ converges absolutely and
\begin{equation}\label{eq:generic.green}
\mG(s)=(\cR(\mA\mF))(s)
\end{equation}
solves the recursion
\begin{equation}\label{eq:generic.recursion}
\mG=\mA\mF+\cK\mG,
\end{equation}
equivalently
$$\mG(s)=\mA(s)\mF(s)+\mA(s)\sum_{i=1}^m\mE_i\mQ\mG(s/p_i).$$
Conversely, any solution of~\eqref{eq:generic.recursion} such that $|\mG(s)|=O(s^{-1})$ as $s\to\infty$ is given by~\eqref{eq:generic.green}.
\end{proposition}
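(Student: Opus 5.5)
We prove convergence by bounding the word products $\mK_\alpha(s)$ directly, then verify the recursion by re-indexing words, and finally obtain uniqueness by iterating the recursion and showing the remainder vanishes.

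\textbf{Convergence.} Fix real $s>0$ with $s\notin\cP$. Then every factor $\mA(s/p_\beta)$ with $\beta\in E^*$ is finite. Its norm is $|\mA(t)|=\max_j \gamma_j/|z_j-t|$.

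Because $p_\beta\le \bar p^{|\beta|}\to0$, only finitely many prefixes satisfy $s/p_\beta\le 2\bar z$. These are the prefixes of length $k<k_0$, where $k_0$ is the least integer with $s/\bar p^{k_0}>2\bar z$. The factors with $k<k_0$ contribute a bounded constant $C_s$, which depends on $s$ and on the finitely many distances $|z_j-s/p_\beta|>0$.

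For $k\ge k_0$ we have $s/p_{\alpha\upord_{k-1}}\ge s\,\bar p^{-(k-1)}$. Hence
$$
|\mA(s/p_{\alpha\upord_{k-1}})|\le \frac{\bar\gamma}{s\bar p^{-(k-1)}-\bar z}\le \frac{2\bar\gamma\,\bar p^{\,k-1}}{s}.
$$
Since $|\mE_i\mQ|\le1$, we obtain $|\mK_\alpha(s)|\le C_s\prod_{k=k_0}^{n}(2\bar\gamma\bar p^{k-1}/s)$.

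There are $m^n$ words of length $n$, so
$$
\sum_{|\alpha|=n}|\mK_\alpha(s)|\,\|\mF\|_\infty\le C_s\,\|\mF\|_\infty\, m^n\,(2\bar\gamma/s)^{n}\,\bar p^{\,n(n-1)/2-O(1)}.
$$
The superexponential factor $\bar p^{n^2/2}$ dominates $m^n(2\bar\gamma/s)^n$, so the series converges absolutely.

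\textbf{Recursion.} The same argument applies with $\mA\mF$ in place of $\mF$, since $|\mA(s/p_\alpha)\mF(s/p_\alpha)|$ is uniformly bounded along words. To check \eqref{eq:generic.recursion}, split each nonempty word as $\alpha=(i_1)\alpha'$. From \eqref{eq:Kword} we get $\mK_\alpha(s)=\mA(s)\mE_{i_1}\mQ\,\mK_{\alpha'}(s/p_{i_1})$, with $\mK_\varnothing=\mId$, and also $p_\alpha=p_{i_1}p_{\alpha'}$.

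Absolute convergence justifies regrouping the series:
$$
(\cR(\mA\mF))(s)=\mA(s)\mF(s)+\mA(s)\sum_{i}\mE_i\mQ\,(\cR(\mA\mF))(s/p_i).
$$
We also need $s/p_i\notin\cP$. This holds because $\cP$ is closed under multiplication by $p_i$: if $s/p_i=z_jp_\beta$, then $s=z_jp_{(i)\beta}\in\cP$.

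\textbf{Uniqueness.} This is the main obstacle. Let $\mG$ solve \eqref{eq:generic.recursion} with $|\mG(s)|\le C/s$ for large $s$, and iterate the recursion $N$ times:
$$
\mG(s)=\sum_{n<N}\sum_{|\alpha|=n}\mK_\alpha(s)\mA(s/p_\alpha)\mF(s/p_\alpha)+\sum_{|\alpha|=N}\mK_\alpha(s)\,\mG(s/p_\alpha).
$$
It suffices to show that the remainder tends to $0$. Since $s/p_\alpha\ge s\bar p^{-N}\to\infty$, the growth hypothesis gives $|\mG(s/p_\alpha)|\le C\bar p^{N}/s$ for large $N$. Combined with the bound $m^N\bar p^{N^2/2}(2\bar\gamma/s)^N C_s$ on $\sum_{|\alpha|=N}|\mK_\alpha(s)|$, the remainder is $O(\bar p^{N^2/2}m^N\cdots)\to0$. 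Hence $\mG=\cR(\mA\mF)$.

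In fact the growth condition is only needed to keep $\mG$ from growing superexponentially along the dilation orbit, and $O(s^{-1})$ more than suffices. The delicate points are the bookkeeping of the finitely many near-pole factors with $k<k_0$, which enter only through $C_s$ because $s\notin\cP$, and the stability of $\cP^c$ under the dilations $s\mapsto s/p_i$ established above.
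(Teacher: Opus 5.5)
Your proof is correct and follows essentially the same route as the paper. Like the paper, you absorb the finitely many near-pole factors (those with $s/p_\beta\le 2\bar z$) into a constant $C_s$ and bound the remaining factors by $2\bar\gamma\bar p^{\,k}/s$, so that the resulting $\bar p^{\,n(n-1)/2}$ decay beats the $m^n$ word count; you then regroup by first phase to get the recursion and iterate it for the converse. You also usefully make explicit that $s/p_i\notin\cP$ whenever $s\notin\cP$ and that $\mA\mF$ stays bounded along the dilation orbit, which the paper leaves implicit. There is one harmless index slip: the factor at position $k$ of $\mK_\alpha(s)$ is $\mA(s/p_{\alpha\upord_{k-1}})$, whose prefix has length $k-1$, so the bound $2\bar\gamma\bar p^{\,k-1}/s$ is guaranteed only for $k\ge k_0+1$. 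The position-$k_0$ factor therefore also belongs in $C_s$, which changes nothing else.
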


\begin{proof}
Fix $s>0$ with $s\notin\cP$, and choose $N_0$ so that $s/\bar p^{N_0}>2 \bar z$.
The finitely many word factors occurring during the first $N_0$ levels are finite.
For every subsequent factor, writing $r=p_{i_1}\cdots p_{i_k}$, $|\mA(s/r)|\le 2 \, \bar \gamma \, r/s$. Since $p_{\alpha\upord_k}\le{\bar p}^{\,k}$, the factor at depth $k+1$ is therefore bounded by a constant times ${\bar p}^{\,k}$.
Since $|\mE_i\mQ|\le1$, there is a constant $C_s<\infty$ such that, for every word $\alpha=(i_1,\ldots,i_n)$ with $n>N_0$,
$$
|\mK_\alpha(s)|
\le C_s\left(\frac{2 \bar \gamma}{s}\right)^{n-N_0}
 \bar p^{N_0+\cdots+(n-1)}.
$$
There are $m^n$ words of length $n$, so the super-geometric factor on the right proves absolute convergence of~\eqref{eq:Doperator}.
Separating the empty word from the series and grouping the remaining words according to their first phase gives~\eqref{eq:generic.recursion}, so~\eqref{eq:generic.green} is a solution.

Conversely, iterating~\eqref{eq:generic.recursion} $N$ times gives the corresponding partial sum of~\eqref{eq:generic.green} plus a remainder containing $\mG(s/p_\alpha)$ at words of length $N$.
The coefficient of each remainder term is $\mK_\alpha(s)$.
Since $|\mG(s)|=O(s^{-1})$ as $s\to\infty$, the additional factor $p_\alpha$ coming from $\mG(s/p_\alpha)$ gives the same super-geometric decay and sends the remainder to zero.
Thus~\eqref{eq:generic.green} is the unique solution in the stated class.
\end{proof}

\begin{corollary}[Upward Green series]\label{prop:upward.green}
For every real $s>\max\{\bar z,\kappa\}$,
\begin{equation}\label{eq:upgreen}
\widetilde\mU(s)=(\cR(\mA\mB))(s).
\end{equation}
\end{corollary}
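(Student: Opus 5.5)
The plan is to apply the converse (uniqueness) part of Proposition~\ref{prop:green.resolution} with the constant forcing $\mF\equiv\mB=-\mDelta^{-1}(\vgamma)$, which is trivially bounded. Equation~\eqref{eq:uprec} states exactly that $\widetilde\mU$ solves $\mG=\mA\mB+\cK\mG$. However, $\widetilde\mU(s)$ is defined only for $s>\kappa$, and the recursion evaluates $\widetilde\mU(s/p_i)$. I will therefore restrict to $s>\max\{\bar z,\kappa\}$. Since $s/p_\alpha\ge s$ for every word $\alpha$, all arguments produced by the iteration stay in this half-line. Moreover $s>\bar z\ge z_jp_\alpha$, so $s\notin\cP$ and no factor $\mA(s/p_\alpha)$ is singular. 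The uniqueness argument in the proof of Proposition~\ref{prop:green.resolution} uses only values at the points $s/p_\alpha$. It therefore applies verbatim to a function defined on $(\max\{\bar z,\kappa\},\infty)$.

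It remains to verify the growth condition $|\widetilde\mU(s)|=O(s^{-1})$ as $s\to\infty$. I will use the bound $|\mU(a)|\le C_1e^{\kappa a}$ established before the Laplace transform was introduced. This gives
\[
|\widetilde\mU(s)|\le\int_0^\infty e^{-sa}C_1e^{\kappa a}\,da=\frac{C_1}{s-\kappa},
\]
which is $O(s^{-1})$. Alternatively, the same order follows from~\eqref{eq:uprec} itself: $|\mA(s)|\le\bar\gamma/(s-\bar z)$, and the remaining terms are bounded.

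With this decay, the remainder after $N$ iterations is $\sum_{|\alpha|=N}\mK_\alpha(s)\widetilde\mU(s/p_\alpha)$. Each coefficient $\mK_\alpha(s)$ has the super-geometric bound from the proposition, and $|\widetilde\mU(s/p_\alpha)|\le C p_\alpha/s$. Hence the remainder tends to zero, and $\widetilde\mU(s)=(\cR(\mA\mB))(s)$, which is~\eqref{eq:upgreen}.

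The main point requiring care is this domain issue. The proposition is stated on all of $(0,\infty)\setminus\cP$, while $\widetilde\mU$ is known only for $s>\kappa$. I will handle it by noting explicitly that the iteration is forward-invariant on $(\max\{\bar z,\kappa\},\infty)$. For such $s$ all factors satisfy $|\mA(s/p_\alpha)|\le\bar\gamma p_\alpha/(s-\bar z p_\alpha)$, so the bound on $\mK_\alpha(s)$ holds directly, without the finitely many exceptional initial factors (one may take $N_0=0$ up to constants).
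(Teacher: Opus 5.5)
Your proposal is correct and takes the paper's route. Both identify \eqref{eq:uprec} as \eqref{eq:generic.recursion} with $\mF=\mB$ and invoke the uniqueness half of Proposition~\ref{prop:green.resolution} once $|\widetilde\mU(s)|=O(s^{-1})$ is checked; your decay bound $C_1/(s-\kappa)$ from $|\mU(a)|\le C_1e^{\kappa a}$, and your check that the iterates $s/p_\alpha$ stay in $(\max\{\bar z,\kappa\},\infty)$ and avoid $\cP$, are slightly more careful than the paper's bare appeal to continuity of $\mU$ at $0$.
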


\begin{proof}
Since $\mB$ is constant, $(\cR\mB)(s)=(\cR\mId)(s)\mB$.
Equation~\eqref{eq:uprec} has the form~\eqref{eq:generic.recursion} with $\mF=\mB$.
Since $\mU$ is continuous at $0$ with $\mU(0)=\mId$, its Laplace transform satisfies $|\widetilde\mU(s)|=O(s^{-1})$ as~$s\to\infty$, so Proposition~\ref{prop:green.resolution} gives~\eqref{eq:upgreen}.
\end{proof}

\begin{remark}\rm
The representation through $\cR(\mA\mB)$ is useful computationally because the truncation error decays faster than geometrically in the word length. 
In the scalar case, the corresponding expansion involves Pochhammer factors built from the geometric sequence $p^k$; see~\cite[Theorem 2.1]{vanderhofstad2023}. Phase modulation replaces this single geometric sequence by a tree of phase words, with the same rapid decay arising from the products $p_\alpha$.
\end{remark}

\section{The downward one-sided problem}\label{sec:downward}
For fixed $b>0$, we now consider the one-sided discounted passage matrix $\mZ(\D[x]{b})$ defined in~\eqref{Z}.
By definition, $\mZ(\D[x]{b})=\mId$ for $x\le b$.
Let $T$ be the first jump epoch.
Starting from $(x,i)$ with $x>b$, the process cannot reach $b$ before the first jump, since its level increases between jumps.
Conditioning on $T$ and on the post-jump phase gives
\begin{equation}\label{eq:downfirst}
\begin{aligned}
\mZ_{ij}(\D[x]{b})
&=\sum_{k=1}^m q_{ik}\int_0^\infty
\lambda_i e^{-(w+\lambda_i)t}
\mZ_{kj}\left(\D[p_i\left(x+\frac{t}{c_i}\right)]{b}\right) dt\\
&=\sum_{k=1}^m q_{ik}\gamma_i e^{z_ix}
\int_{p_ix}^\infty e^{-(z_i/p_i)y}\mZ_{kj}(\D[y]{b})\,dy.
\end{aligned}
\end{equation}
Define the first-jump operator~$\cD_b$ on $\cB_b$ by
\begin{equation}\label{eq:downoperator}
(\cD_b\mG)_{ij}(x)=
\begin{cases}
\displaystyle
\sum_{k=1}^m q_{ik}\int_0^\infty
\lambda_i e^{-(w+\lambda_i)t}
\mG_{kj}\left(p_i\left(x+\frac{t}{c_i}\right)\right)dt,
&x>b,\\
\delta_{ij} ,&x\le b.
\end{cases}
\end{equation}
Then~\eqref{eq:downfirst} is the fixed-point equation $\mZ(\D[\cdot]{b})=\cD_b\mZ(\D[\cdot]{b})$. Moreover, for $\mG_1,\mG_2\in\cB_b$,
$$
\|\cD_b\mG_1-\cD_b\mG_2\|_\infty
\le \max_{i\in E}\frac{\lambda_i}{w+\lambda_i}
\|\mG_1-\mG_2\|_\infty<\|\mG_1-\mG_2\|_\infty.
$$
Hence $\cD_b$ is a strict contraction, and the downward passage matrix is its unique bounded fixed point. In particular, the Picard iteration $\mG^{(n+1)}=\cD_b\mG^{(n)}$ converges uniformly to $\mZ(\D[\cdot]{b})$ for every $\mG^{(0)}\in\cB_b$.

Equivalently, rowwise,
\begin{equation}\label{eq:downrow}
\ve_i\mZ(\D[x]{b})=\gamma_i e^{z_ix}\ve_i\mQ
\int_{p_ix}^\infty e^{-(z_i/p_i)y}\mZ(\D[y]{b})\,dy,
\qquad x>b.
\end{equation}
We now transform this fixed-point equation in order to relate $\cD_b$ to the spatial operator $\cK$ of~Section~\ref{sec:upward}. Because~\eqref{eq:downrow} is valid only for $x>b$, the spatial transform must be taken on that same domain. For a bounded matrix-valued function $\mG$, set
$$
(\cEll_b\mG)(s)=\int_b^\infty e^{-sx}\mG(x)\,dx,\qquad s>0.
$$
Thus
\begin{equation}\label{eq:downtransform}
\widetilde\mZ_b(s)=(\cEll_b\mZ(\D[\cdot]{b}))(s).
\end{equation}
Since $0\le\mZ(\D[x]{b})\vi\le\vi$, the integral is finite for every $s>0$.

Applying $\cEll_b$ to~\eqref{eq:downrow} yields, rowwise,
$$
\ve_i\widetilde\mZ_b(s)
=\gamma_i\ve_i\mQ\int_b^\infty e^{(z_i-s)x}
\int_{p_ix}^\infty e^{-(z_i/p_i)y}\mZ(\D[y]{b})\,dy\,dx.
$$
The integration region in row $i$ is $x\ge b$ and $y\ge p_i x$. After reversing the order of integration and collecting the rows, we obtain
\begin{equation}\label{eq:downreverse}
\widetilde\mZ_b(s)
=\mA(s)\int_0^\infty \mDelta(y\ge \vp b)
\left[\mEps(-sy/\vp)-\mEps\bigl(b(\vz-s\vi)-(\vz/\vp)y\bigr)\right]
\mQ\mZ(\D[y]{b})\,dy.
\end{equation}
The post-jump variable enters the phase-dependent intervals $[p_i b,b]$, where $\mZ(\D[y]{b})=\mId$ is already prescribed. Separating this known contribution from the part over $[b,\infty)$ gives the boundary forcing
\begin{equation}\label{eq:Ri}
\begin{aligned}
\mR_b(s)
&=\int_0^b \mDelta(y\ge \vp b)
\left[\mEps(-sy/\vp)-\mEps\bigl(b(\vz-s\vi)-(\vz/\vp)y\bigr)\right]dy\\
&=\frac{1}{s}\mDelta(\vp)\left(e^{-sb}\mId-\mEps(-sb/\vp)\right)
-\mDelta(\vp/\vz)\mEps\bigl(b(\vz-s\vi)\bigr)
\left(\mEps(-b\vz)-\mEps(-b\vz/\vp)\right).
\end{aligned}
\end{equation}
Thus $\mR_b(s)$ is known explicitly in terms of the model parameters. 

The remaining part of~\eqref{eq:downreverse} gives
\begin{equation}\label{eq:downrowtransform}
\widetilde\mZ_b(s)
= \mA(s)\left(\mR_b(s)\mQ+e^{-bs}\mH_b\right) + (\cK\widetilde\mZ_b)(s),
\end{equation}
where
$$
\mH_b=-\sum_{i=1}^m e^{z_i b}\mE_i\mQ\widetilde\mZ_b(z_i/p_i).
$$
Thus, once $\mH_b$ is determined,~\eqref{eq:downrowtransform} has the same spatial recursion as the upward problem.

It remains to determine $\mH_b$ consistently with the finitely many transform values $\widetilde\mZ_b(z_i/p_i)$. For a provisional matrix $\mH$, define
\begin{equation}\label{eq:FbH}
\mF_b^{\mH}(s)=\mR_b(s)\mQ+e^{-bs}\mH.
\end{equation}
The corresponding transform recursion is
\begin{equation}\label{eq:downrecH}
\widetilde\mZ_b^{\mH}(s)
=\mA(s)\mF_b^{\mH}(s)
+\mA(s)\sum_{i=1}^m\mE_i\mQ\widetilde\mZ_b^{\mH}(s/p_i).
\end{equation}

For $\mH=\mH_b$, equation~\eqref{eq:downrecH} is exactly the Laplace transform of the unique fixed-point equation for $\cD_b$. To recover $\mH_b$, we impose its defining relation on the transform $\widetilde\mZ_b^{\mH}$. If $z_i/p_i\notin\cP$ for every $i\in E$, define the affine map
\begin{equation}\label{eq:Cbmap}
\cC_b(\mH)=-\sum_{i=1}^m e^{z_i b}\mE_i\mQ\widetilde\mZ_b^{\mH}(z_i/p_i).
\end{equation}
By the definition of $\mH_b$, the compatibility condition is
\begin{equation}\label{eq:Hbfix}
\mH_b=\cC_b(\mH_b),
\end{equation}
and the downward transform is $\widetilde\mZ_b=\widetilde\mZ_b^{\mH_b}$. 

\begin{theorem}[Downward Green representation]\label{thm:downward}
The one-sided downward transform satisfies
\begin{equation}\label{eq:downgreen}
\widetilde\mZ_b(s)=\bigl(\cR(\mA\mF_b^{\mH_b})\bigr)(s),
\end{equation}
for every real $s>0$ with $s\notin\cP$, where $\mH_b$ satisfies the compatibility condition~\eqref{eq:Hbfix}. The series is absolutely convergent. At $s\in\cP$, the apparent singularities on the right-hand side are removable, and~\eqref{eq:downgreen} extends by continuity.
\end{theorem}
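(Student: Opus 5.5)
The plan is to reduce the statement to Proposition~\ref{prop:green.resolution} with the forcing $\mF=\mF_b^{\mH_b}$. First I will check that the true transform $\widetilde\mZ_b$ solves~\eqref{eq:downrecH} with $\mH=\mH_b$. This is exactly~\eqref{eq:downrowtransform}. It comes from applying $\cEll_b$ to the fixed-point equation~\eqref{eq:downrow}, reversing the order of integration as in~\eqref{eq:downreverse}, and then splitting the $y$-integral into $[p_ib,b]$, where $\mZ(\D[y]{b})=\mId$ and which produces $\mR_b(s)\mQ$, and $[b,\infty)$.

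On $[b,\infty)$ the first exponential $e^{-sy/p_i}$ gives $\mE_i\mQ\widetilde\mZ_b(s/p_i)$. The second gives $-e^{b(z_i-s)}\mE_i\mQ\widetilde\mZ_b(z_i/p_i)$, which is $e^{-bs}\mH_b$ row by row. Here $\mH_b$ is defined directly from the true transform, so no fixed-point argument for $\cC_b$ is needed to make it well defined. I will only remark that~\eqref{eq:Hbfix} then holds tautologically once~\eqref{eq:downgreen} is established.

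Next, $\mF_b^{\mH_b}$ must be bounded on $(0,\infty)$. The term $e^{-bs}\mH_b$ is bounded because $|\widetilde\mZ_b(z_i/p_i)|\le p_i e^{-z_ib}/z_i$. For $\mR_b(s)$, the expression $\frac1s(e^{-sb}-e^{-sb/p_i})$ tends to $b(1/p_i-1)$ as $s\downarrow0$ and is bounded elsewhere, while the second term decays like $e^{-sb}$. So $\mF$ is bounded. Proposition~\ref{prop:green.resolution} then gives that the series $\cR(\mA\mF)$ converges absolutely for $s\notin\cP$ and is a solution.

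For uniqueness I need $|\widetilde\mZ_b(s)|=O(s^{-1})$ as $s\to\infty$. This is immediate: $|\mZ(\D[x]{b})|\le1$, so $|\widetilde\mZ_b(s)|\le e^{-sb}/s$. The converse part of Proposition~\ref{prop:green.resolution} then identifies $\widetilde\mZ_b$ with $\cR(\mA\mF_b^{\mH_b})$ on $(0,\infty)\setminus\cP$, which is~\eqref{eq:downgreen}.

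The main obstacle is the claim about removable singularities at $s\in\cP$. The convergence argument in Proposition~\ref{prop:green.resolution} is pointwise at $s\notin\cP$ and its constant $C_s$ blows up near poles, so it says nothing there. I will argue instead that $\widetilde\mZ_b(s)$ is the Laplace transform of a bounded function on $[b,\infty)$, hence analytic and in particular continuous on all of $(0,\infty)$.

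Since the two sides agree on the dense set $(0,\infty)\setminus\cP$, the right-hand side has a continuous extension across each point of $\cP$. That extension coincides with $\widetilde\mZ_b$, so every apparent pole is removable. For a cleaner statement I will also note that, for $s$ in a compact interval $I\subset(0,\infty)$, only finitely many factors $\mA(s/p_\alpha)$ can be singular: those with $s/p_\alpha\le\bar z$ are at bounded depth. The series is therefore locally uniformly convergent after excluding finitely many levels. It follows that the sum is meromorphic near $I$, and agreement with the analytic $\widetilde\mZ_b$ forces the poles to cancel.
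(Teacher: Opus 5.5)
Your proposal is correct and follows the paper's proof essentially step for step. Both verify \eqref{eq:downrowtransform} by splitting off the known interval $[p_ib,b]$, check that $\mF_b^{\mH_b}$ is bounded and that $|\widetilde\mZ_b(s)|\le e^{-bs}/s$, use both directions of Proposition~\ref{prop:green.resolution}, and get removability on $\cP$ from continuity of $\widetilde\mZ_b$ as the Laplace transform of a bounded function. Your meromorphy remark and explicit bounds are harmless extras, with two trivial slips: the first term of $\mR_b$ carries a factor $p_i$, and the sharp bound is $|\widetilde\mZ_b(z_i/p_i)|\le p_ie^{-z_ib/p_i}/z_i$.
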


\begin{proof}
Taking $\mH=\mH_b$ in~\eqref{eq:downrecH} gives~\eqref{eq:downrowtransform}. The known intervals contribute $\mA(s)\mR_b(s)\mQ$, while the last sum in~\eqref{eq:downrowtransform} is $\mA(s)e^{-bs}\mH_b$. Thus the transform equation has the form~\eqref{eq:generic.recursion}.
Since $\mF_b^{\mH_b}$ is bounded on $(0,\infty)$ and, because $0\le\mZ(\D[x]{b})\vi\le\vi$, $|\widetilde\mZ_b(s)|\le e^{-bs}/s$ for every real $s>0$, Proposition~\ref{prop:green.resolution} gives~\eqref{eq:downgreen} and absolute convergence for $s\notin\cP$. Moreover, $\widetilde\mZ_b$ is continuous on~$(0,\infty)$ as the Laplace transform of a bounded function on $[b,\infty)$. Hence, at every $s\in\cP$, the complete expression on the right-hand side of~\eqref{eq:downgreen} has the finite limit $\widetilde\mZ_b(s)$ as $r\to s$, $r>0$, $r\notin\cP$.
\end{proof}

\begin{remark}[Scalar check]\label{rem:scalarcheck}
\rm When $m=1$, $c_1=1$, $Q=(1)$, $p_1=p$ and $\lambda_1=\lambda$, equation~\eqref{eq:downrowtransform} becomes
$$
\widetilde Z(\D[s]{b})
=\frac{\lambda}{p(w+\lambda-s)}
\left(R_b(s)+\widetilde Z(\D[s/p]{b})
-e^{(w+\lambda-s)b}\widetilde Z(\D[(w+\lambda)/p]{b})\right),
$$
with $R_b(s)$ given by the scalar version of~\eqref{eq:Ri}. After elementary simplification of the explicit~$R_b(s)$ term, this is exactly the scalar downward transform recursion in~\cite{vanderhofstad2023}.
\end{remark}

\begin{remark}\rm
The dependence of $\widetilde\mZ_b^{\mH}$ on $\mH$ can be affine, so the compatibility condition~\eqref{eq:Hbfix} reduces to finite-dimensional linear algebra.
With $\mH_0(s)=\mA(s)\mR_b(s)\mQ$ and $\mH_1(s)=\mA(s)e^{-bs}$, define the matrices
$$
\mGamma_b=\sum_{i=1}^m e^{z_i b}\mE_i\mQ\,\cR\mH_0(z_i/p_i)
,\qquad
\mPi_b=\sum_{i=1}^m e^{z_i b}\mE_i\mQ\,\cR\mH_1(z_i/p_i).
$$
By linearity of $\cR$, we have $\cC_b(\mH)=-\mGamma_b-\mPi_b\,\mH$, and the fixed-point equation~\eqref{eq:Hbfix} is equivalent to
\begin{equation}\label{eq:finiteclosure}
(\mId+\mPi_b)\mH_b=-\mGamma_b.
\end{equation}
If $\mId+\mPi_b$ is nonsingular, this determines $\mH_b$ and hence~\eqref{eq:downgreen} by finite-dimensional linear algebra.
\end{remark}


\section{Two-sided exit}\label{sec:two-sided}
For $0\le b<a$ define
\begin{equation}\label{eq:upward.twosided}
\mZ_{ij}(w;\UD[x]{a}{b})
=\bE_{x,i}[e^{-w\taua};\taua<\taub,J(\taua)=j].
\end{equation}
We set $\mZ(\UD[x]{a}{b})=0$ for $x\le b$ and $\mZ(\UD[x]{a}{b})=\mId$ for $x\ge a$.
With $\bar p=\max_i p_i$, the recursion below needs an initial strip, which is explicit.

\begin{proposition}[Initial two-sided strip]\label{prop:L.seed}
Assume $b>0$ and $b<a\le b/\bar p$. Then, for $b<x\le a$,
\begin{equation}\label{eq:Lseed}
\mZ(\UD[x]{a}{b})=\mEps(-(a-x)\vz).
\end{equation}
If $b=0$, then $\taub=\infty$ almost surely for every positive starting level and therefore
$$
\mZ(\UD[x]{a}{0})=\mZ(\U[x]{a}),\qquad 0<x\le a.
$$
\end{proposition}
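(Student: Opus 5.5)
The plan is a direct pathwise argument based on the first jump epoch $T$; no transform machinery is needed.

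\emph{Case $b>0$.} Fix $b<x\le a\le b/\bar p$ and an initial phase $i$. Before the first jump the level increases deterministically at rate $1/c_i$. It therefore reaches $a$ at time $c_i(a-x)$, and the phase is still $i$ at that moment. Hence on $\{T>c_i(a-x)\}$ we have $\taua=c_i(a-x)<\taub$ and $J(\taua)=i$.

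On the complementary event $\{T\le c_i(a-x)\}$, the pre-jump level $X(T-)$ lies in $[x,a]$. The post-jump level is $p_iX(T-)\le \bar p a\le b$, so $\taub=T<\taua$ and this event contributes nothing to $\mZ(\UD[x]{a}{b})$. (When $x=a$ the passage is immediate and the formula gives $\mId$.)

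It follows that $\mZ_{ij}(\UD[x]{a}{b})=\delta_{ij}\,e^{-w c_i(a-x)}\Pr_{x,i}(T>c_i(a-x))$. Since $T$ is exponential with rate $\lambda_i$, this equals $\delta_{ij}e^{-(w+\lambda_i)c_i(a-x)}=\delta_{ij}e^{-z_i(a-x)}$, which is exactly the diagonal matrix $\mEps(-(a-x)\vz)$.

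\emph{Case $b=0$.} Starting from $x>0$, the level is strictly positive on every inter-jump interval. This is because each jump multiplies the current positive level by $p_i>0$, and between jumps the level increases. Only finitely many jumps occur on compact time intervals, so $X(t)>0$ for all $t$ almost surely. Hence $\tau_{\D 0}=\infty$ almost surely, the constraint $\taua<\taub$ holds automatically whenever $\taua<\infty$, and the two-sided matrix reduces to $\mZ(\U[x]{a})$.

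The only delicate point is the boundary case where the post-jump level equals $b$ exactly, which happens when $X(T-)=a=b/\bar p$ and $p_i=\bar p$. Because $\taub$ is defined with $X(t)\le b$, this still counts as downward passage. Moreover, the event that the jump occurs exactly at $c_i(a-x)$ has probability zero, so it does not affect the formula.
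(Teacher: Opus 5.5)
Your proposal is correct and follows essentially the same argument as the paper. Any jump during the deterministic climb from $x$ to $a$ lands at or below $\bar p a\le b$, so only the no-jump path contributes $\delta_{ij}e^{-z_i(a-x)}$, and for $b=0$ strict positivity of the level means the lower boundary is never reached. Your extra care with the endpoint $x=a$ and the null event $T=c_i(a-x)$ only makes explicit what the paper's shorter argument leaves implicit.
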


\begin{proof}
Suppose first that $b>0$. Before the process reaches $a$, its pre-jump level is strictly smaller than $a$. Hence a jump from phase $i$ lands below or at $p_i a\le \bar pa\le b$.
Consequently the event $\{\taua<\taub\}$ is exactly the event that no jump occurs during the deterministic climb from~$x$ to $a$. Starting in phase $i$, that climb takes time $c_i(a-x)$, the phase does not change, and the discounted probability of no jump is $e^{-z_i(a-x)}$.
This gives \eqref{eq:Lseed}. If $b=0$, positivity of the drift and the multiplicative factors $p_i\in(0,1)$ imply that a process started from $x>0$ remains strictly positive at every finite time, so the lower boundary is never attained and the two-sided upward transform reduces to the one-sided one.
\end{proof}

\begin{proposition}[Cocycle property]\label{prop:L.cocycle}
If $b<x\le a'<a$, then
\begin{equation}\label{eq:Lcocycle}
\mZ(\UD[x]{a}{b})
=\mZ(\UD[x]{a'}{b})\mZ(\UD[a']{a}{b}).
\end{equation}
Moreover $\mZ(\UD[x]{a}{b})$ is invertible for $b<x\le a$.
\end{proposition}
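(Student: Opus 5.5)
The cocycle identity will follow from the strong Markov property at $\tau_{\U{a'}}$, in the same way as in the proof of Theorem~\ref{thm:upward}. Start from $(x,i)$ with $b<x\le a'<a$. Because the process is upward skip-free, every path reaching $a$ before $b$ must first reach $a'$ before $b$. At that time $X(\tau_{\U{a'}})=a'$, and the process has not yet hit $b$. So
$$\{\taua<\taub\}=\{\tau_{\U{a'}}<\taub\}\cap\theta_{\tau_{\U{a'}}}^{-1}\{\taua<\taub\},$$
where $\theta$ denotes the shift. On this event, $\taua=\tau_{\U{a'}}+\taua\circ\theta_{\tau_{\U{a'}}}$.

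Conditioning on $\cF_{\tau_{\U{a'}}}$ and summing over the phase $k=J(\tau_{\U{a'}})$ gives
$$\mZ_{ij}(\UD[x]{a}{b})=\sum_k\mZ_{ik}(\UD[x]{a'}{b})\,\mZ_{kj}(\UD[a']{a}{b}),$$
which is \eqref{eq:Lcocycle}. If $x=a'$ the identity is trivial, since $\mZ(\UD[a']{a'}{b})=\mId$.

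For invertibility I would copy the local argument of Theorem~\ref{thm:upward}, adapted to the killed setting. For $b<y<y+h\le a$, the probability of a jump during the deterministic climb from $y$ to $y+h$ is at most $1-e^{-\bar\lambda\bar c h}$. On the no-jump event the process reaches $y+h$ before $b$, stays in its phase $i$, and contributes exactly $e^{-z_i h}$ on the diagonal. Every other contribution is nonnegative and bounded in total row mass by $1-e^{-\bar\lambda\bar ch}$. This gives, uniformly in $y$,
$$|\mZ(\UD[y]{y+h}{b})-\mId|\le (1-e^{-\bar zh})+(1-e^{-\bar\lambda\bar ch})=O(h).$$
Choose $h$ with this bound below $1$. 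Then each local matrix is invertible by the Neumann series.

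Next, partition $[x,a]$ into $x=y_0<y_1<\dots<y_n=a$ with mesh at most $h$. Iterating \eqref{eq:Lcocycle} gives
$$\mZ(\UD[x]{a}{b})=\prod_{k=0}^{n-1}\mZ(\UD[y_k]{y_{k+1}}{b}),$$
which is a finite product of invertible matrices and hence invertible. The case $x=a$ is trivial.

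The one delicate point is the uniformity of the local $O(h)$ estimate near the lower boundary, i.e.\ when $y$ is close to $b$. It causes no difficulty, because the estimate only uses the no-jump event, and on that event the path moves up and away from $b$. A jump can only reduce the matrix entries, not create mass off the diagonal beyond the jump probability. So the bound does not depend on $b$ or $y$.
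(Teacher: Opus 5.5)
Your proof is correct and follows the same route as the paper: the strong Markov property at $\tau_{\U{a'}}$ (using upward skip-freeness) for the cocycle identity, then local invertibility of $\mZ(\UD[y]{y+h}{b})$ combined with a finite partition of $[x,a]$ for invertibility. Your explicit bound $(1-e^{-\bar z h})+(1-e^{-\bar\lambda\bar c h})$, uniform in $y$ and $b$, supplies the uniformity that the paper's sketch (``similar arguments like in the proof of Theorem~\ref{thm:upward}'') leaves implicit.
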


\begin{proof}
On $\{\taua<\taub\}$ the continuous upward path must first reach $a'$. Applying the strong Markov property there and summing over the phase at $a'$ gives \eqref{eq:Lcocycle}. For $h\downarrow0$, the matrix~$\mZ(\UD[x]{x+h}{b})$ converges to $\mId$ whenever $x>b$,
 and we can use similar arguments like in the proof of Theorem \ref{thm:upward} to show
 invertibility of $\mZ(\UD[x]{a}{b})$.
\end{proof}

The cocycle gives a constructive continuation in the upper level.
Suppose that $\mZ(\UD[y]{a}{b})$ is known for every $y<a$. If $x\in[a,a/\bar p)$, then $p_i x<p_i a/\bar p\le a$ for every $i$, so a jump occurring before $a/\bar p$ lands in a region where the old two-sided transform is already known; see Figure~\ref{fig:continuation-layer}.

\begin{figure}[t]
\centering
\begin{tikzpicture}[x=1cm,y=0.85cm,>=Latex,font=\small]
\draw[->] (0,0)--(10.8,0) node[right] {$t$};
\draw[->] (0,0)--(0,4.8) node[above] {$X(t)$};
\fill[black!8] (0,0) rectangle (10.2,2.25);
\draw[dashed,gray] (0,0.8)--(10.2,0.8); \node[left] at (0,0.8) {$b$};
\draw[thick] (0,2.25)--(10.2,2.25); \node[left] at (0,2.25) {$a$};
\draw[dashed,gray] (0,4.0)--(10.2,4.0); \node[left] at (0,4.0) {$a/\bar p$};
\draw[black,thick,densely dashed] (0.7,2.25)--(4.2,4.0);
\fill[black] (4.2,4.0) circle (2pt);
\node[above] at (1.75,3.2) {no jump};
\draw[black,thick,densely dashed] (5.5,2.25)--(7.3,3.35);
\draw[black,thick,->] (7.3,3.35)--(7.3,1.55);
\fill[black] (7.3,1.55) circle (2pt);
\draw[black,dashed,->] (7.3,1.55)..controls(8.2,1.65)and(8.8,2.05)..(9.5,2.25);
\node[right] at (7.55,1.45) {$p_i(x+t/c_i)<a$};
\node[above] at (6.1,3.2) {jump first};
\node[black,anchor=north] at (1.65,1.75) {known area below $a$};
\end{tikzpicture}
\caption{One continuation layer for the two-sided transform. From $a$ to $a/\bar p$, either the upper level is reached without a jump, or a jump lands below $a$, where the transform is already known.}
\label{fig:continuation-layer}
\end{figure}

For $x\in[a,a/\bar p)$ define
\begin{equation}\label{eq:Pdef}
\mP_{ij}(x)=\gamma_i e^{z_i x}
\int_{p_ix}^{p_i a/\bar p}
\sum_{h=1}^m q_{ih}\mZ_{hj}(\UD[y]{a}{b})
e^{-(z_i/p_i)y}\,dy.
\end{equation}

\begin{theorem}[Recursive upper continuation]\label{thm:two.sided.up}
For $x\in[a,a/\bar p)$,
\begin{equation}\label{eq:Lcontinuation}
\mZ(\UD[x]{a/\bar p}{b})
=\mEps(-(a/\bar p-x)\vz)
+\mP(x)\mZ(\UD[a]{a/\bar p}{b}),
\end{equation}
where
\begin{equation}\label{eq:Lendpoint}
\mZ(\UD[a]{a/\bar p}{b})
=(\mId-\mP(a))^{-1}\mEps(-(a/\bar p-a)\vz).
\end{equation}
In particular, knowledge of the upward two-sided transform below $a$ determines it below $a/\bar p$.
\end{theorem}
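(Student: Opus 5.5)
The plan is a first-jump decomposition from a starting level $x\in[a,a/\bar p)$ with target level $a/\bar p$ and lower barrier $b$. The jump, if it comes first, lands below $a$, so the two-sided transform there is known. We then transport from $a$ to $a/\bar p$ using the cocycle of Proposition~\ref{prop:L.cocycle}, and close the resulting linear equation by evaluating it at $x=a$.

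\textbf{Step 1: first-jump decomposition.} Fix $x\in[a,a/\bar p)$ and initial phase $i$. Let $T$ be the first jump epoch, and split on whether $T$ exceeds $c_i(a/\bar p-x)$.
\begin{itemize}
\item If there is no jump during the climb, the level reaches $a/\bar p$ deterministically with phase still $i$. Since $x\ge a>b$, the level stays above $b$, so this event contributes $e^{-w c_i(a/\bar p-x)}e^{-\lambda_i c_i(a/\bar p-x)}=e^{-z_i(a/\bar p-x)}$ to the $(i,i)$ entry. This is the term $\mEps(-(a/\bar p-x)\vz)$.
\item If the jump occurs at $t<c_i(a/\bar p-x)$, the process jumps to $y=p_i(x+t/c_i)\in[p_ix,p_ia/\bar p)$ with new phase $h$ chosen with probability $q_{ih}$. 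Note $y< p_i a/\bar p\le a$.
\end{itemize}
By the strong Markov property at $T$, the jump case contributes
$$\int_0^{c_i(a/\bar p-x)}\lambda_ie^{-(w+\lambda_i)t}\sum_h q_{ih}\,\mZ_{hj}(\UD[y]{a/\bar p}{b})\,dt.$$
The substitution $y=p_i(x+t/c_i)$ turns $\lambda_i e^{-(w+\lambda_i)t}\,dt$ into $\gamma_i e^{z_ix}e^{-(z_i/p_i)y}\,dy$, exactly as in~\eqref{eq:downfirst}.

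\textbf{Step 2: factorizing through level $a$.} For $b<y\le a$, Proposition~\ref{prop:L.cocycle} with $a'=a$ gives
$$\mZ(\UD[y]{a/\bar p}{b})=\mZ(\UD[y]{a}{b})\,\mZ(\UD[a]{a/\bar p}{b}).$$
For $y\le b$ both sides vanish, because $\taub=0$ and we use the convention $\mZ(\UD[y]{a}{b})=0$. So the factorization holds on the whole range of $y$. Pulling the constant right factor out of the integral, the jump term equals $\mP(x)\mZ(\UD[a]{a/\bar p}{b})$ with $\mP$ as in~\eqref{eq:Pdef}. This proves~\eqref{eq:Lcontinuation}. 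It involves only $\mZ(\UD[y]{a}{b})$ for $y<a$, which is the assumed known data.

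\textbf{Step 3: closing at $x=a$.} Setting $x=a$ in~\eqref{eq:Lcontinuation} gives
$$(\mId-\mP(a))\,\mZ(\UD[a]{a/\bar p}{b})=\mEps(-(a/\bar p-a)\vz).$$
The remaining issue is invertibility of $\mId-\mP(a)$, which I expect to be the only real obstacle. The matrix $\mP(a)$ has nonnegative entries, and each two-sided matrix $\mZ(\UD[y]{a}{b})$ is substochastic in the row-sum sense. Hence row $i$ of $\mP(a)$ sums to at most the discounted probability of a jump before the climb from $a$ to $a/\bar p$, namely
$$\frac{\lambda_i}{\lambda_i+w}\bigl(1-e^{-z_i(a/\bar p-a)}\bigr)<1.$$
Therefore $|\mP(a)|<1$ in the submultiplicative row-sum norm. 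The Neumann series $\sum_n\mP(a)^n$ then converges, which gives~\eqref{eq:Lendpoint}.

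Substituting~\eqref{eq:Lendpoint} back into~\eqref{eq:Lcontinuation} shows that the known values below $a$ determine the transform on $[a,a/\bar p)$. The cocycle then extends this to all starting levels below $a/\bar p$.
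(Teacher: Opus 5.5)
Your proposal is correct and follows the paper's proof essentially step for step. It uses the same first-jump decomposition on $[0,c_i(a/\bar p-x))$ and factorizes the post-jump transform through level $a$ via the cocycle. It then evaluates at $x=a$ and gets invertibility of $\mId-\mP(a)$ from the row-sum bound $\frac{\lambda_i}{w+\lambda_i}\bigl(1-e^{-z_i(a/\bar p-a)}\bigr)<1$ and a Neumann series. Your explicit check that the factorization also holds for post-jump levels $y\le b$ (both sides vanish) covers a point the paper leaves implicit, since its cocycle proposition is stated only for starting levels above $b$.
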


\begin{proof}
Fix the initial phase $i$ and put $d_i(x)=c_i(a/\bar p-x)$. If the first jump time $T_i$ exceeds~$d_i(x)$, the process reaches $a/\bar p$ continuously with phase $i$, contributing $e^{-(w+\lambda_i)d_i(x)}\delta_{ij}=e^{-z_i(a/\bar p-x)}\delta_{ij}$.
If $T_i=t<d_i(x)$, the post-jump state is $p_i(x+t/c_i)<a$. The cocycle property therefore gives
$$
\mZ\left(\UD[p_i(x+t/c_i)]{a/\bar p}{b}\right)
=\mZ\left(\UD[p_i(x+t/c_i)]{a}{b}\right)
\mZ(\UD[a]{a/\bar p}{b}).
$$
Conditioning on $T_i$ and the post-jump phase yields
$$
\begin{aligned}
\mZ_{ij}(\UD[x]{a/\bar p}{b})
={}&e^{-z_i(a/\bar p-x)}\delta_{ij}\\
&+\sum_{h,k}q_{ih}\int_0^{d_i(x)}
\lambda_i e^{-(w+\lambda_i)t}
\mZ_{hk}\left(\UD[p_i(x+t/c_i)]{a}{b}\right)dt\,
\mZ_{kj}(\UD[a]{a/\bar p}{b}).
\end{aligned}
$$
With $y=p_i(x+t/c_i)$, the jump contribution is $\mP(x)\mZ(\UD[a]{a/\bar p}{b})$, which gives \eqref{eq:Lcontinuation}. Evaluating this identity at the interface $x=a$ gives $(\mId-\mP(a))\mZ(\UD[a]{a/\bar p}{b})=\mEps(-(a/\bar p-a)\vz)$.
Finally, since the row sums of $\mZ(\UD[y]{a}{b})$ are at most one,
$$
|\mP(a)|
\le\max_i\left\{\int_0^{c_i(a/\bar p-a)}\lambda_i e^{-(w+\lambda_i)t}\,dt\right\}
=\max_i\left\{\frac{\lambda_i}{w+\lambda_i}\left(1-e^{-z_i(a/\bar p-a)}\right)\right\}<1.
$$
Hence $\mId-\mP(a)$ is invertible and $(\mId-\mP(a))^{-1}=\sum_{n\ge0}\mP(a)^n$.
\end{proof}

Proposition~\ref{prop:L.seed} and Theorem~\ref{thm:two.sided.up} give a complete constructive scheme. For $b>0$ start with any upper level $a_0\le b/\bar p$, where the transform is diagonal and explicit. Repeatedly replace $a$ by $a/\bar p$; at each step the integral \eqref{eq:Pdef} uses only values already computed below the previous upper level. For $b=0$ the initial data are supplied by the one-sided upward transform of Theorem~\ref{thm:upward}. Thus no phase-dependent vector barrier is needed for the scalar upper and lower levels considered here.

Define the downward two-sided transform by
$$
\mZ_{ij}(w;\DU[x]{a}{b})
=\bE_{x,i}[e^{-w\taub};\taub<\taua,J(\taub)=j].
$$

\begin{corollary}[Downward two-sided exit]\label{cor:two.sided.down}
For $b\le x<a$,
\begin{equation}\label{eq:twosideddown}
\mZ(\DU[x]{a}{b})
=\mZ(\D[x]{b})-\mZ(\UD[x]{a}{b})\mZ(\D[a]{b}).
\end{equation}
\end{corollary}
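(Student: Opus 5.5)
The identity follows by splitting the one-sided downward passage according to which barrier is hit first, and using that the process is upward skip-free. Fix $b\le x<a$, a starting phase $i$ and a target phase $j$. Since $\taub<\infty$ or not, and $\taua\neq\taub$ on their common finiteness set (levels $a$ and $b$ cannot be attained simultaneously, because $b<a$), we decompose
$$
\bE_{x,i}[e^{-w\taub};J(\taub)=j]
=\bE_{x,i}[e^{-w\taub};\taub<\taua,J(\taub)=j]
+\bE_{x,i}[e^{-w\taub};\taua<\taub,J(\taub)=j],
$$
with the convention $e^{-w\infty}=0$. The first term is $\mZ_{ij}(\DU[x]{a}{b})$, so the whole task is to identify the second term as $\bigl(\mZ(\UD[x]{a}{b})\mZ(\D[a]{b})\bigr)_{ij}$.

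For the second term I will apply the strong Markov property at the stopping time $\taua$ on the event $\{\taua<\taub\}$, which belongs to $\cF_{\taua}$. Because upward motion is continuous, $X(\taua)=a$ on $\{\taua<\infty\}$. On $\{\taua<\taub\}$ we have $\taub=\taua+\taub\circ\theta_{\taua}$, so $e^{-w\taub}=e^{-w\taua}e^{-w\taub\circ\theta_{\taua}}$. Conditioning on $\cF_{\taua}$ and summing over the phase $k=J(\taua)$ then gives
$$
\sum_{k=1}^m\bE_{x,i}[e^{-w\taua};\taua<\taub,J(\taua)=k]\,\bE_{a,k}[e^{-w\taub};J(\taub)=j]
=\sum_{k}\mZ_{ik}(\UD[x]{a}{b})\mZ_{kj}(\D[a]{b}).
$$
Rearranging yields \eqref{eq:twosideddown}.

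The only delicate points concern boundary cases and conventions.

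\textbf{The case $x=b$.} Here $\taub=0$, so $\mZ(\D[b]{b})=\mId=\mZ(\DU[b]{a}{b})$. The convention $\mZ(\UD[b]{a}{b})=0$ makes the identity consistent.

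\textbf{Starting level in the restart.} After $\taua$ the process starts at level exactly $a>b$. The one-sided quantity $\mZ(\D[a]{b})$ is the genuine first-passage transform from $a$, and no shift of the time origin issue arises.

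\textbf{Null events.} One should note that $\{\taua=\taub\}$ only occurs where both times are infinite. This event contributes zero because $w>0$.

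I expect no real obstacle beyond stating the strong Markov property with the shift operator carefully. It is the same argument already used for the cocycle identities in Theorem~\ref{thm:upward} and Proposition~\ref{prop:L.cocycle}.
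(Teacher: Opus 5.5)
Your proof is correct and follows essentially the paper's route: split $\mZ(\D[x]{b})$ according to whether $\taub<\taua$ or $\taua<\taub$, then apply the strong Markov property at $\taua$ on $\{\taua<\taub\}$, using $X(\taua)=a$ from upward skip-freeness. Your remarks on the case $x=b$ and on the null event $\{\taua=\taub=\infty\}$ add a bit of care that the paper leaves implicit.
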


\begin{proof}
Decompose the one-sided downward passage according to whether $b$ is reached before $a$. On $\{\taua<\taub\}$ apply the strong Markov property at $\taua$, where the level is exactly $a$. This gives
$$
\mZ(\D[x]{b})=\mZ(\DU[x]{a}{b})
+\mZ(\UD[x]{a}{b})\mZ(\D[a]{b}),
$$
which is \eqref{eq:twosideddown}.
\end{proof}

\section{Reflection at the running infimum}\label{sec:reflected}
We finally turn to first passage for the process reflected at its running infimum.
Starting from an initial running minimum $v>0$, define
$$
\uX_t=v\wedge\inf_{0\le s\le t}X(s),\qquad Y_t=X(t)-\uX_t,
\qquad \tau_a=\inf\{t\ge0:Y_t\ge a\}.
$$
Fix $w,a>0$.
For $b>0$ let $\rhob=\inf\{t>0:X(t)<b\}$; the strict convention in $t$ is needed when the process starts from the current minimum.
To underline the dependence on $\underline{X}_0=v$ we introduce notation $\mathbb{E}_{(x,v),i}[\cdot]=\mathbb{E}[\cdot| X_0=x, \underline{X}_0=v, J_0=i]$.
We define
\begin{equation}\label{eq:ref.H}
(\mH_a(v))_{ij}:=\bE_{(v,v),i}[e^{-w\tau_a};\tau_a<\infty,J(\tau_a)=j]
\end{equation}
Observe that if a jump from $(x,v)$ in phase $i$ satisfies $p_i x<v$, then the post-jump state is $(p_i x,p_i x)$ with a new phase $k$ chosen according to $q_{ik}$.
Hence the reflected coordinate returns to zero, but the continuation matrix is $\mH_a(p_i x)$ with the post-jump phase retained.

For $v<x<v+a$ we use  
$$
\mZ_{ij}(w;\UD[x]{v+a}{v})
=\bE_{(x,v),i}[e^{-w\tau_{\U{v+a}}};\tau_{\U{v+a}}<\rhov,J(\tau_{\U{v+a}})=j].
$$
and use the entrance notation
$$
\mZ_{ij}(w;\UD[v^+]{v+a}{v})
=\bE_{(v,v),i}[e^{-w\tau_{\U{v+a}}};\tau_{\U{v+a}}<\rhov,J(\tau_{\U{v+a}})=j].
$$
For a bounded Borel matrix function $\mG:(0,\infty)\to\bR^{m\times m}$ define the restart operator
\begin{equation}\label{eq:ref.S}
(\cV_a\mG)_{ij}(v):=\bE_{(v,v),i}[e^{-w\rhov}\mG_{J(\rhov),j}(X(\rhov));\rhov<\tau_{\U{v+a}}],
\end{equation}
and the affine Picard map
\begin{equation}\label{eq:ref.T}
(\cT_a\mG)(v):= \mZ(\UD[v^+]{v+a}{v}) + (\cV_a\mG)(v).
\end{equation}

\begin{lemma}[Strict contraction]\label{lem:ref.contraction}
The map $\cT_a$ is a strict contraction on bounded Borel matrix functions with the supremum row-sum norm, with Lipschitz constant
\begin{equation}\label{eq:ref.beta}
\beta:=\max_{i\in E}\frac{\lambda_i}{w+\lambda_i}\left(1-e^{-z_i a}\right)<1.
\end{equation}
\end{lemma}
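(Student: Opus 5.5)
Since $\cT_a$ is affine with constant part $\mZ(\UD[v^+]{v+a}{v})$ independent of $\mG$, we have $\cT_a\mG_1-\cT_a\mG_2=\cV_a(\mG_1-\mG_2)$. It therefore suffices to show that the linear operator $\cV_a$ satisfies $\|\cV_a\mG\|_\infty\le\beta\|\mG\|_\infty$ for every bounded Borel $\mG$. Fix $v>0$ and a starting phase $i$. Bound the $i$-th row sum of $(\cV_a\mG)(v)$ by
$$
\sum_j|(\cV_a\mG)_{ij}(v)|\le \bE_{(v,v),i}\Bigl[e^{-w\rhov}\sum_j|\mG_{J(\rhov),j}(X(\rhov))|;\rhov<\tau_{\U{v+a}}\Bigr]\le \|\mG\|_\infty\,\bE_{(v,v),i}\bigl[e^{-w\rhov};\rhov<\tau_{\U{v+a}}\bigr].
$$
This reduces everything to the scalar bound $\bE_{(v,v),i}[e^{-w\rhov};\rhov<\tau_{\U{v+a}}]\le\beta$.

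For that bound, use the path structure. Started from $(v,v)$ in phase $i$, the level climbs deterministically at slope $1/c_i$ until the first jump $T$. Reaching $v+a$ without a jump takes time $c_ia$. Because the level is nondecreasing between jumps, and the strict convention $\rhov=\inf\{t>0:X(t)<v\}$ excludes time $0$, the event $\{\rhov<\tau_{\U{v+a}}\}$ forces the first jump to occur before time $c_ia$. That is, $\{\rhov<\tau_{\U{v+a}}\}\subseteq\{T<c_ia\}$, and on this event $\rhov\ge T$. Hence
$$
\bE_{(v,v),i}[e^{-w\rhov};\rhov<\tau_{\U{v+a}}]\le \bE[e^{-wT};T<c_ia]=\int_0^{c_ia}\lambda_ie^{-(w+\lambda_i)t}\,dt=\frac{\lambda_i}{w+\lambda_i}\bigl(1-e^{-z_ia}\bigr).
$$
This is the same computation as in the bound for $|\mP(a)|$ in Theorem~\ref{thm:two.sided.up}. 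Taking the maximum over $i$ gives $\beta$. Since $w>0$, every factor $\lambda_i/(w+\lambda_i)<1$ and $1-e^{-z_ia}<1$, so $\beta<1$.

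The main point to get right is the inclusion $\{\rhov<\tau_{\U{v+a}}\}\subseteq\{T<c_ia\}$ together with $\rhov\ge T$. Monotonicity of paths between jumps and the strict infimum in the definition of $\rhov$ give both. I would also note why this crude bound suffices: a first jump may land at $p_i(v+t/c_i)\ge v$ and fail to trigger $\rhov$, but dropping that refinement only strengthens the inequality, since we have only enlarged the event.

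A minor measurability check is also needed. Since $\mG$ is Borel and $(\rhov,X(\rhov),J(\rhov))$ is a random variable, $\cV_a\mG$ is again a bounded Borel matrix function, so $\cT_a$ maps the complete space of bounded Borel matrix functions into itself. Banach's fixed point theorem then applies in the sequel.
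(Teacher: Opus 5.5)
Your proposal is correct and follows the paper's proof essentially step for step. You isolate the linear part $\cV_a$, bound each row sum by $\|\mG\|_\infty\,\bE_{(v,v),i}[e^{-w\rhov};\rhov<\tau_{\U{v+a}}]$, and dominate this by $\bE[e^{-wT};T<c_ia]=\frac{\lambda_i}{w+\lambda_i}(1-e^{-z_ia})$ using $\{\rhov<\tau_{\U{v+a}}\}\subseteq\{T<c_ia\}$ and $\rhov\ge T$, exactly as the paper does.

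Your measurability remark goes beyond the paper, but its justification is incomplete. Borel measurability of $v\mapsto(\cV_a\mG)(v)$ follows from a pathwise construction that is jointly measurable in the starting point, together with Fubini. It does not follow merely from $(\rhov,X(\rhov),J(\rhov))$ being a random variable for each fixed $v$.
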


\begin{proof}
Let $\mG_1$ and $\mG_2$ be bounded Borel matrix functions.
Fix $v>0$ and $i\in E$.
On ${\rhov<\tau_{\U{v+a}}}$, the first jump occurs before the no-jump climb from $v$ to $v+a$, which takes time $c_i a$ when the initial phase is $i$.
Thus, if $T_i\sim\Exp(\lambda_i)$ denotes the first jump time, then $T_i<c_i a$ and $\rhov\ge T_i$ on this event.
Hence,
$$
\bE_{(v,v),i}\left[e^{-w\rhov};,\rhov<\tau_{\U{v+a}}\right]
\le \bE\left[e^{-wT_i};,T_i<c_i a\right]
=\frac{\lambda_i}{w+\lambda_i}\left(1-e^{-z_i a}\right)
\le\beta.
$$
Since the entrance term in \eqref{eq:ref.T} does not depend on $\mG$, the definition of $\cV_a$ and the preceding estimate give
$$
\|\cT_a\mG_1-\cT_a\mG_2\|_\infty
\le\beta\|\mG_1-\mG_2\|_\infty.
$$
\end{proof}

\begin{theorem}[Diagonal restart value]\label{thm:ref.restart}
The matrix-valued function $H_a$ defined in \eqref{eq:ref.H} is the unique bounded Borel function satisfying
\begin{equation}\label{eq:ref.Hfixed}
\mH_a=\cT_a\mH_a.
\end{equation}
Moreover, we have $0<\mH_a(v)\vi<\vi$ componentwise for every $v>0$.
If $\mH_a^{(0)}=0$ and
\begin{equation}\label{eq:ref.picard}
\mH_a^{(n+1)}=\cT_a\mH_a^{(n)},
\end{equation}
then $\|\mH_a-\mH_a^{(n)}\|_\infty\le\beta^n$.
\end{theorem}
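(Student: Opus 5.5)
We first check that $\mH_a$ solves \eqref{eq:ref.Hfixed}, then get uniqueness and the Picard rate from Lemma~\ref{lem:ref.contraction} via Banach's fixed point theorem, and finally prove the strict bounds $0<\mH_a(v)\vi<\vi$ separately.

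\emph{Fixed-point identity.} Start from $(v,v)$ in phase $i$ and split the event $\{\tau_a<\infty\}$ according to whether $\tau_{\U{v+a}}<\rhov$ or $\rhov<\tau_{\U{v+a}}$. The two times cannot coincide, since upward motion is continuous and $\rhov$ is a jump epoch.
\begin{itemize}
\item On $\{\tau_{\U{v+a}}<\rhov\}$ the running minimum stays equal to $v$ up to that time. Hence $Y$ first reaches $a$ exactly when $X$ reaches $v+a$, so $\tau_a=\tau_{\U{v+a}}$. This piece contributes $\mZ(\UD[v^+]{v+a}{v})$.
\item On $\{\rhov<\tau_{\U{v+a}}\}$, before $\rhov$ we have $Y<a$. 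The jump at $\rhov$ lands at $X(\rhov)<v$, which becomes the new running minimum, so $Y(\rhov)=0$. The post-jump state is therefore $(X(\rhov),X(\rhov))$ with phase $J(\rhov)$.
\end{itemize}
The pair $(X,\uX,J)$ is strong Markov, since $\uX$ is an additive functional of the path. Applying the strong Markov property at $\rhov$ gives the contribution $\bE_{(v,v),i}[e^{-w\rhov}(\mH_a(X(\rhov)))_{J(\rhov),j};\rhov<\tau_{\U{v+a}}]$, which equals $(\cV_a\mH_a)(v)$. Adding the two contributions gives $\mH_a=\cT_a\mH_a$.

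\emph{Uniqueness and convergence.} Two regularity points must be checked first.
\begin{itemize}
\item $\mH_a$ is bounded with row sums at most $1$, because it is a discounted substochastic transform.
\item $v\mapsto\mH_a(v)$ is Borel. This follows from the measurability of $(x,v)\mapsto\Pr_{(x,v),i}(\cdot)$ for the process built from exponential clocks and the jump chain.
\end{itemize}
Then Lemma~\ref{lem:ref.contraction} and the completeness of bounded Borel matrix functions under $\|\cdot\|_\infty$ give a unique fixed point, which must be $\mH_a$. For the rate, note that $\|\mH_a-\mH_a^{(0)}\|_\infty=\|\mH_a\|_\infty\le1$. Induction on $\|\mH_a-\mH_a^{(n)}\|_\infty=\|\cT_a\mH_a-\cT_a\mH_a^{(n-1)}\|_\infty$ then gives the bound $\beta^n$.

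\emph{Strict bounds.} The lower bound follows from the no-jump climb in phase $i$, which has probability $e^{-\lambda_i c_i a}>0$. On that event $\tau_a=c_i a$, so $\ve_i\mH_a(v)\vi\ge e^{-z_i a}>0$. For the upper bound we use $w>0$ and $\tau_a\ge c_{\min}a>0$, where $c_{\min}=\min_{i\in E}c_i$; this holds because $Y$ grows at rate at most $1/c_{\min}$. Hence $\ve_i\mH_a(v)\vi\le\bE[e^{-w\tau_a}]\le e^{-wc_{\min}a}<1$.

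The main obstacle is the second case of the fixed-point identity. There we must justify rigorously that the reflected process restarts from the diagonal state with the retained absolute level and phase. This needs the strong Markov property of the augmented process $(X,\uX,J)$ and the identification of $\tau_a$ after $\rhov$ with the first passage of the restarted process. Everything else is routine.
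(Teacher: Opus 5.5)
Your proposal is correct and follows the paper's proof essentially step by step: the strong Markov property at $\rho_v\wedge\tau_{\U{v+a}}$ yields $\mH_a=\cT_a\mH_a$, Lemma~\ref{lem:ref.contraction} yields uniqueness and the rate $\|\mH_a-\mH_a^{(n)}\|_\infty\le\beta^n\|\mH_a\|_\infty\le\beta^n$, and the no-jump climb together with $w>0$ yields the strict bounds. Your extra care is a welcome addition to the paper's argument: you check Borel measurability of $\mH_a$, and you give the explicit bound $\ve_i\mH_a(v)\vi\le e^{-wa\min_k c_k}$, valid because $Y$ grows at speed at most $1/\min_k c_k$ and only decreases at jumps. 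One correction: $\uX$ is not an additive functional---the triple $(X,\uX,J)$ is Markov because $\uX_{t+s}=\uX_t\wedge\inf_{t\le u\le t+s}X(u)$.
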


\begin{proof}
By the strong Markov property at $\rhov\wedge\tau_{\U{v+a}}$, the matrix-valued function $\mH_a$ defined in~\eqref{eq:ref.H} satisfies \eqref{eq:ref.Hfixed}.
Its uniqueness follows from Lemma~\ref{lem:ref.contraction}.
The inequality $\mH_a(v)\mathbf{1}>0$ follows from the event that no jump occurs before the process reaches $v+a$, while $\mH_a(v)\mathbf{1}<\mathbf{1}$ follows from $\tau_a>0$ almost surely and $w>0$.
Finally, since $\mH_a$ is a fixed point of the contraction $\cT_a$ and $|\mH_a|\infty\le1$,
$$
\|\mH_a-\mH_a^{(n)}\|_\infty
\le\beta^n\|\mH_a-\mH_a^{(0)}\|_\infty
\le\beta^n,
$$
which proves the claimed estimate.
\end{proof}

The Picard iteration \eqref{eq:ref.picard} acts on whole matrix functions of $v$.
We now realize one application of $\cT_a$ by a finite method of steps.
For $k\ge0$, set
$$
x_k(v):=\frac{v}{\bar p^k},\qquad \cD_k:=\{(x,v):v>0,\ x_k(v)\le x\le x_{k+1}(v)\};
$$
see Figure~\ref{fig:geometric-regions}.
If $(x,v)\in\cD_k$ and $p_i x\ge v$, then $p_i x\le\bar p x\le x_k(v)$, so every post-jump argument lies either below the diagonal or in a region already constructed; we proceed from left to right.

\begin{figure}[t]
\centering
\begin{tikzpicture}[x=0.85cm,y=0.85cm,>=Latex,font=\small]
\draw[->] (0,0)--(8.4,0) node[right] {$x$};
\draw[->] (0,0)--(0,6.4) node[above] {$v$};
\fill[gray!8] (0,0)--(7.7,7.7)--(7.7,4.62)--cycle;
\fill[gray!16] (0,0)--(7.7,4.62)--(7.7,2.77)--cycle;
\fill[gray!24] (0,0)--(7.7,2.77)--(7.7,1.66)--cycle;
\draw[thick] (0,0)--(6.2,6.2) node[above right] {$v=x$};
\draw[thick] (0,0)--(8.0,4.8) node[right] {$v=\bar p x$};
\draw[thick] (0,0)--(8.0,2.88) node[right] {$v=\bar p^2x$};
\draw[thick] (0,0)--(8.0,1.73) node[right] {$v=\bar p^3x$};
\node at (7.1,5.25) {$\cD_0$};
\node at (7.1,3.25) {$\cD_1$};
\node at (7.1,2.05) {$\cD_2$};
\draw[->,black,thick] (6.3,2.35)--(4,2.35);
\node[black,above] at (5.55,2.35) {$x\mapsto p_i x$};
\end{tikzpicture}
\caption{The $\bar p$-geometric regions $\cD_k$. A jump from $\cD_k$ lands in the already constructed part of the domain unless it creates a new minimum.}
\label{fig:geometric-regions}
\end{figure}

Given a bounded Borel $\mG$, define $\mK(x,v)$ and $(\cJ\mG)(x,v)$ recursively over the regions $\cD_k$ by~$\mK(v,v)=\mId$, $(\cJ\mG)(v,v)=0$, and, rowwise for Lebesgue-a.e. $x>v$,
\begin{align}\label{eq:ref.K}
\partial_x(\ve_i\mK(x,v))
={}&z_i\ve_i\mK(x,v)-c_i\lambda_i\indF{p_ix\ge v}\sum_{k=1}^m q_{ik}\ve_k\mK(p_ix,v), \\
\partial_x(\ve_i(\cJ\mG)(x,v))
={}&z_i\ve_i(\cJ\mG)(x,v)-c_i\lambda_i\indF{p_ix\ge v}\sum_{k=1}^m q_{ik}\ve_k(\cJ\mG)(p_ix,v)\label{eq:ref.J}\\
&+c_i\lambda_i\indF{p_ix<v}\sum_{k=1}^m q_{ik}\ve_k\mG(p_ix).\nonumber
\end{align}
The recursion is non-circular because every argument $(p_i x,v)$ above the diagonal lies in the already constructed part of the domain.
For $v>0$ put
\begin{equation}\label{eq:ref.kstar}
k_a^*(v):=\min\{k\ge0:v+a\le x_{k+1}(v)\}.
\end{equation}
Only $\cD_0,\ldots,\cD_{k_a^*(v)}$ are needed at $(v+a,v)$.

For $v<x<v+a$ let $\mF_{\mG}(\cdot,v)$ denote a 
bounded solution of the following equation
\begin{equation}\label{eq:ref.FG.harmonica1}
\frac1{c_i}\partial_x(\ve_i\mF_{\mG}(x,v))
-(w+\lambda_i)\ve_i\mF_{\mG}(x,v)
+\lambda_i\sum_kq_{ik}\ve_k\mG(p_ix)=0
\end{equation}
if $p_ix<v$ and
\begin{equation}\label{eq:ref.FG.harmonica2}
\frac1{c_i}\partial_x(\ve_i\mF_{\mG}(x,v))
-(w+\lambda_i)\ve_i\mF_{\mG}(x,v)
+\lambda_i\sum_kq_{ik}\ve_k
%
\mF_{\mG}(p_ix,v)
=0
\end{equation}
if $p_ix\ge v$.

\begin{proposition}[Global realization of the Picard map]\label{prop:ref.deterministic}
For every bounded Borel matrix function $\mG$, $\mK(v+a,v)$ is invertible and
\begin{equation}\label{eq:ref.T.det}
(\cT_a\mG)(v)=\mK(v+a,v)^{-1}\left(\mId+(\cJ\mG)(v+a,v)\right),\qquad v>0.
\end{equation}
Moreover, if $0\le\mG\vi\le\vi$, then $0\le\mF_{\mG}(x,v)\vi\le\vi$ for $v\le x\le v+a$, and
\begin{equation}\label{eq:ref.FG}
\mF_{\mG}(x,v)=\mK(x,v)(\cT_a\mG)(v)-(\cJ\mG)(x,v).
\end{equation}
\end{proposition}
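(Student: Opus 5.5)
The plan is to give $\mF_{\mG}$ a probabilistic meaning and then use linearity of the delay ODE in $x$. For a bounded Borel $\mG$ with $0\le\mG\vi\le\vi$, set, for $v\le x\le v+a$,
$$
\ve_i\mF_{\mG}(x,v)=\ve_i\mZ(\UD[x]{v+a}{v})+\bE_{(x,v),i}\bigl[e^{-w\rhov}\ve_{J(\rhov)}\mG(X(\rhov));\rhov<\tau_{\U{v+a}}\bigr],
$$
with the entrance convention at $x=v$.

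This function has two boundary values:
\begin{itemize}[label={}]
\item at $x=v+a$ it equals $\mId$, because $\tau_{\U{v+a}}=0<\rhov$;
\item at $x=v$ it equals $(\cT_a\mG)(v)$, by \eqref{eq:ref.T} and the definition \eqref{eq:ref.S}.
\end{itemize}
The bound $0\le\mF_{\mG}\vi\le\vi$ follows directly, since the two events are disjoint, $w>0$, and $0\le\mG\vi\le\vi$.

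First I will show that this $\mF_{\mG}$ solves \eqref{eq:ref.FG.harmonica1}--\eqref{eq:ref.FG.harmonica2}. I condition on the first jump, exactly as in \eqref{eq:downfirst} and in the proof of Theorem~\ref{thm:two.sided.up}. Starting from $(x,i)$, the process either climbs to $v+a$ without a jump, or it jumps at $t<c_i(v+a-x)$ to the level $p_i(x+t/c_i)$. If this landing level is below $v$, the contribution is $\mG$ at the landing point, since a new minimum is created and $X(\rhov)$ is the landing level. Otherwise the contribution is $\mF_{\mG}$ at the landing point with the same $v$. Changing variables to $y=x+t/c_i$ gives
$$
\ve_i\mF_{\mG}(x,v)=e^{-z_i(v+a-x)}\ve_i+c_i\lambda_i\int_x^{v+a}e^{-z_i(y-x)}\sum_k q_{ik}\ve_k\Bigl(\indF{p_iy<v}\mG(p_iy)+\indF{p_iy\ge v}\mF_{\mG}(p_iy,v)\Bigr)dy .
$$
The right-hand side is absolutely continuous in $x$. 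Differentiating it almost everywhere gives the two equations, with the indicator splitting them according to whether $p_ix<v$ or $p_ix\ge v$.

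Next comes uniqueness and linear structure, via the method of steps over the regions $\cD_k$. Fix $v$ and go from left to right. On $\cD_k$, every delayed argument $p_ix\ge v$ lies in $\cD_0\cup\dots\cup\cD_{k-1}$ or on the diagonal, so the equation is a linear ODE with a known forcing term. Hence any solution of \eqref{eq:ref.FG.harmonica1}--\eqref{eq:ref.FG.harmonica2} on $[v,v+a]$ is uniquely determined by its value $\mF_{\mG}(v,v)$. Only finitely many steps are needed, namely up to $k_a^*(v)$.

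Multiplying the equations by $c_i$ shows that $\mF_{\mG}$ satisfies \eqref{eq:ref.K} plus the forcing $-c_i\lambda_i\indF{p_ix<v}\sum_kq_{ik}\ve_k\mG(p_ix)$. This is the negative of the forcing in \eqref{eq:ref.J}. The candidate $\mK(x,v)\mF_{\mG}(v,v)-(\cJ\mG)(x,v)$ solves the same system, because right multiplication by a constant matrix commutes with the rowwise equations. It also has the same initial value, since $\mK(v,v)=\mId$ and $(\cJ\mG)(v,v)=0$. By uniqueness, the two agree. Since $\mF_{\mG}(v,v)=(\cT_a\mG)(v)$, this is \eqref{eq:ref.FG}.

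Finally, evaluating \eqref{eq:ref.FG} at $x=v+a$ and using $\mF_{\mG}(v+a,v)=\mId$ gives
$$
\mK(v+a,v)(\cT_a\mG)(v)=\mId+(\cJ\mG)(v+a,v).
$$
For invertibility, take $\mG=0$. Then $\cJ0=0$, so the display becomes $\mK(v+a,v)\,\mZ(\UD[v^+]{v+a}{v})=\mId$. For square matrices a one-sided inverse is a two-sided inverse, so $\mK(v+a,v)$ is invertible. Notably, $\mK$ does not depend on $\mG$, so this conclusion holds for every $\mG$, and \eqref{eq:ref.T.det} follows.

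For general bounded $\mG$, without the sign condition, I would handle things by linearity in $\mG$: the probabilistic definition and all identities above are linear in $\mG$. Alternatively, I could split $\mG$ into its positive and negative parts and rescale.

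The main obstacle is the entrance boundary at $x=v$. There the strict convention $\rhov=\inf\{t>0:X(t)<v\}$ must be reconciled with continuity of $\mF_{\mG}(\cdot,v)$ at $x=v^+$. I would handle it by noting that the integral representation above stays valid at $x=v$, because the path leaves $v$ upward immediately. A related point is that the derivative exists only almost everywhere, because the indicator switches at $x=v/p_i$. This is why the equations are stated for Lebesgue-a.e. $x$, and why the uniqueness argument should be run on the integral form rather than the differential form.
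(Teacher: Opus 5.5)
Your proposal is correct and follows essentially the same route as the paper: the affine structure $\mF_{\mG}=\mK\mC-\cJ\mG$ read off from \eqref{eq:ref.K}--\eqref{eq:ref.J}, method-of-steps uniqueness over the regions $\cD_k$, a right inverse of $\mK(v+a,v)$ obtained from the probabilistic solution with $\mG=0$, and identification of the value at $(v,v)$ with $(\cT_a\mG)(v)$ through the first-jump decomposition. Your version makes explicit what the paper compresses into ``by the first-step decomposition'': that the probabilistically defined continuation value solves \eqref{eq:ref.FG.harmonica1}--\eqref{eq:ref.FG.harmonica2} in integral (Volterra) form with terminal value $\mId$, which also fixes which ``bounded solution'' $\mF_{\mG}$ is meant.
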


\begin{proof}
Equations \eqref{eq:ref.K}--\eqref{eq:ref.J} show that every solution of \eqref{eq:ref.FG.harmonica1}-\eqref{eq:ref.FG.harmonica2} has the form
$$
\mF_{\mG}(x,v)=\mK(x,v)\mC-(\cJ\mG)(x,v)
$$
for a constant matrix $\mC$.
The method of steps gives uniqueness successively on the finitely many regions intersecting $[v,v+a]$.
For $\mG=0$, the probabilistically defined continuation problem has a solution with terminal value $\mId$ at $v+a$.
Hence its representation above yields~$\mK(v+a,v)\mC=\mId$ for some matrix $\mC$. Thus~$\mK(v+a,v)$ has a right inverse and, being square, is invertible.
For general $\mG$, imposing $\mF_{\mG}(v+a,v)=\mId$ yields
$$
\mC=\mK(v+a,v)^{-1}\left(\mId+(\cJ\mG)(v+a,v)\right).
$$
Since $\mK(v,v)=\mId$ and $(\cJ\mG)(v,v)=0$, the value at $(v,v)$ is $\mC$; by the first-step decomposition it is precisely $(\cT_a\mG)(v)$.
The same decomposition shows that $0\le\mF_{\mG}(x,v)\vi\le\vi$ whenever $0\le\mG\vi\le\vi$.
This proves \eqref{eq:ref.T.det} and \eqref{eq:ref.FG}.
\end{proof}

At the fixed point, define
\begin{equation}\label{eq:ref.Fa}
\mF_a=\mF_{\mH_a}.
\end{equation}
By Theorem~\ref{thm:ref.restart} and Proposition~\ref{prop:ref.deterministic}, $0\le\mF_a(x,v)\vi\le\vi$ and
$$
\mF_a(v,v)=\mH_a(v),\qquad \mF_a(v+a,v)=\mId.
$$
Then \eqref{eq:ref.FG.harmonica1}-\eqref{eq:ref.FG.harmonica2} becomes
\begin{equation}\label{eq:ref.Fa.harmonic}
\frac1{c_i}\partial_x(\ve_i\mF_a(x,v))-w\ve_i\mF_a(x,v)
+\lambda_i\sum_kq_{ik}\ve_k\mF_a(p_ix,\min\{v,p_ix\})-\lambda_i\ve_i\mF_a(x,v)=0,
\end{equation}
where $\mF_a(y,y)=\mH_a(y)$.

\begin{theorem}[Reflected first passage]\label{thm:reflected}
For $0<v\le x<v+a$,
\begin{equation}\label{eq:ref.answer}
(\mF_a(x,v))_{ij}=\bE_{(x,v),i}[e^{-w\tau_a};\tau_a<\infty,J(\tau_a)=j].
\end{equation}
In particular $\mF_a(v,v)=\mH_a(v)$.
\end{theorem}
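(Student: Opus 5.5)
The plan is to define the probabilistic candidate
$$
\mathbf{\Phi}_{ij}(x,v):=\bE_{(x,v),i}[e^{-w\tau_a};\tau_a<\infty,J(\tau_a)=j],\qquad 0<v\le x\le v+a,
$$
and show that it satisfies the same characterization as $\mF_a$. That characterization is the equation with terminal value $\mId$ at $x=v+a$, whose solution Proposition~\ref{prop:ref.deterministic} determines uniquely via the method of steps. The first step is a first-jump decomposition of $\mathbf{\Phi}$. Starting from $(x,v)$ in phase $i$ with $v\le x<v+a$, the running minimum stays at $v$ and $Y$ increases until either the first jump time $T_i$ or the deterministic hitting of $v+a$ at time $c_i(v+a-x)$. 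If no jump occurs, we obtain $e^{-z_i(v+a-x)}\delta_{ij}$. If $T_i=t$ comes first, the post-jump level is $y=p_i(x+t/c_i)$, and the new state is $(y,\min\{v,y\})$ with phase $k$ drawn from $q_{ik}$. The strong Markov property, together with the fact that $\tau_a$ depends only on the future of $(X,\uX,J)$, gives the integral equation
$$
\ve_i\mathbf{\Phi}(x,v)=e^{-z_i(v+a-x)}\ve_i+\gamma_i e^{z_ix}\int_{p_ix}^{p_i(v+a)}e^{-(z_i/p_i)y}\sum_k q_{ik}\ve_k\mathbf{\Phi}(y,\min\{v,y\})\,dy .
$$
Here $\mathbf{\Phi}(y,y)=\mH_a(y)$ by the definition~\eqref{eq:ref.H}, since a restart at a new minimum $y$ is exactly the diagonal state $(y,y)$.

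The second step is to differentiate this identity in $x$, which is legitimate because the right-hand side is absolutely continuous in $x$. This shows that $\mathbf{\Phi}(\cdot,v)$ solves \eqref{eq:ref.FG.harmonica1}--\eqref{eq:ref.FG.harmonica2} with $\mG=\mH_a$, that is, \eqref{eq:ref.Fa.harmonic}. It also satisfies $\mathbf{\Phi}(v+a,v)=\mId$ and $0\le\mathbf{\Phi}\vi\le\vi$. By Proposition~\ref{prop:ref.deterministic}, every bounded solution has the form $\mK(x,v)\mC-(\cJ\mH_a)(x,v)$. The terminal condition then forces $\mC=\mK(v+a,v)^{-1}(\mId+(\cJ\mH_a)(v+a,v))=(\cT_a\mH_a)(v)$. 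By Theorem~\ref{thm:ref.restart} this equals $\mH_a(v)$, so $\mathbf{\Phi}(\cdot,v)=\mF_a(\cdot,v)$ by \eqref{eq:ref.FG} and \eqref{eq:ref.Fa}.

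The last claim, $\mF_a(v,v)=\mH_a(v)$, is immediate from $\mK(v,v)=\mId$ and $(\cJ\mH_a)(v,v)=0$. It can alternatively be read off from \eqref{eq:ref.answer} at $x=v$, where the starting state $(v,v)$ coincides with the one in \eqref{eq:ref.H}.

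The main obstacle is the consistency of boundary data in the uniqueness step. The method of steps in Proposition~\ref{prop:ref.deterministic} fixes $v$ and treats values on the diagonal $\{(y,y):y<v\}$ as the external input $\mG$. So I must check that the jumps landing below $v$ feed in exactly $\mH_a(p_ix)$, which is guaranteed because $\mH_a$ is the probabilistic diagonal value. I must also check that the region-by-region construction over $\cD_0,\dots,\cD_{k_a^*(v)}$ pins down the solution on all of $[v,v+a]$, using only the single free matrix $\mC$ fixed at the left endpoint. I would first verify this on $\cD_0$, where the equation is a linear ODE with forcing only. On that region uniqueness given the value at $x=v$ is standard, and induction over $k$ then uses only already-determined values at $p_ix$.
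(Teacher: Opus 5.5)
Your proof is correct, but it decomposes at a different random time than the paper does.

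The paper applies the strong Markov property at $\rhov$, the epoch at which a new running minimum is created. On $\{\tau_{\U{v+a}}<\rhov\}$ passage is completed in the current excursion. On $\{\rhov<\tau_{\U{v+a}}\}$ the process restarts from the diagonal state $(X(\rhov),X(\rhov))$ with continuation value $\mH_a(X(\rhov))$. This exhibits the reflected transform directly as the continuation value with restart payoff $\mH_a$, mirroring the structure of $\cV_a$ and $\cT_a$. The identification with $\mF_a$ is then delegated to Proposition~\ref{prop:ref.deterministic}, where $\mF_{\mG}$ is matched with that continuation value ``by the first-step decomposition''.

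You instead condition on the first jump. This treats jumps landing at or above $v$ (same excursion) and below $v$ (new minimum) uniformly through $\mathbf{\Phi}(y,\min\{v,y\})$. Differentiating the resulting integral equation verifies explicitly that $\mathbf{\Phi}(\cdot,v)$ solves \eqref{eq:ref.Fa.harmonic}, with the forcing $\mH_a(p_ix)$ entering exactly because $\mathbf{\Phi}(y,y)=\mH_a(y)$ by definition. After that the identification is purely analytic, via the method of steps. Your route therefore spells out the step the paper leaves implicit, at the price of a short computation. The paper's route is shorter and ties the result to the restart operator.

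One small redundancy: either boundary condition alone pins down $\mC$.
\begin{itemize}
\item The terminal value $\mId$ at $v+a$, together with invertibility of $\mK(v+a,v)$, already forces $\mathbf{\Phi}(\cdot,v)=\mF_a(\cdot,v)$ without invoking $\cT_a\mH_a=\mH_a$.
\item Alternatively, $\mathbf{\Phi}(v,v)=\mH_a(v)=\mF_a(v,v)$ with forward uniqueness from $x=v$ needs no invertibility.
\end{itemize}
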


\begin{proof}
The definition of $\mF_a$ is the first-step recursion for the reflected passage problem with continuation value $\mH_a(y)$ when a jump creates a new minimum $y$.
Indeed, on $\{\tau_{\U{v+a}}<\rhov\}$ passage is completed during the current excursion, while on $\{\rhov<\tau_{\U{v+a}}\}$ the process restarts from the diagonal state $(X(\rhov),X(\rhov))$ with phase $J(\rhov)$.
By \eqref{eq:ref.H}, the continuation transform from that state is $\mH_a(X(\rhov))$.
The strong Markov property at $\rhov$ therefore identifies the unique solution of the recursion with the right-hand side of \eqref{eq:ref.answer}.
\end{proof}

The martingale property is now an immediate consequence of the preceding identification.
\begin{proposition}[Doob martingale]\label{prop:ref.martingale}
For $v\le x<v+a$,
\begin{equation}\label{eq:ref.M}
M_t=e^{-w(t\wedge\tau_a)}\ve_{J(t\wedge\tau_a)}\mF_a(X(t\wedge\tau_a),\uX_{t\wedge\tau_a})
\end{equation}
is a bounded row-vector martingale.
\end{proposition}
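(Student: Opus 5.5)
The plan is to realise $M_t$ as a conditional expectation of a single bounded terminal random variable, so that the martingale property follows from the tower property. Fix the initial state $(x,v,i)$ and write $Y_t=X(t)-\uX_t$. Consider the bounded row vector
$$
\xi=e^{-w\tau_a}\ve_{J(\tau_a)}\indF{\tau_a<\infty},
$$
and its Doob martingale $N_t=\bE_{(x,v),i}[\xi\mid\cF_t]$. Since $|\xi|\le1$, $N$ is a bounded, uniformly integrable martingale. It therefore suffices to show that $N_t=M_t$ almost surely for every $t\ge0$.

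First I check that $(X,\uX,J)$ is a strong Markov process. This holds because the running minimum evolves by $\uX_{t}=\uX_{s}\wedge\inf_{s\le r\le t}X(r)$, so it is a functional of its value at time $s$ and of the future path of the Markov process $(X,J)$. The reflected passage time is $\tau_a=\inf\{t:Y_t\ge a\}$.

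On the event $\{t<\tau_a\}$ we have $\tau_a=t+\tau_a\circ\theta_t$, where $\theta_t$ is the shift. The Markov property at time $t$ then gives
$$
N_t=e^{-wt}\,\ve_{J(t)}\,\Phi(X(t),\uX_t),\qquad
\Phi_{kj}(y,u)=\bE_{(y,u),k}[e^{-w\tau_a};\tau_a<\infty,J(\tau_a)=j].
$$
By Theorem~\ref{thm:reflected}, $\Phi(y,u)=\mF_a(y,u)$ whenever $u\le y<u+a$. This range always contains the state $(X(t),\uX_t)$ when $t<\tau_a$, because then $0\le Y_t<a$. So $N_t=M_t$ on $\{t<\tau_a\}$.

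On the complementary event $\{t\ge\tau_a\}$, the variable $\xi$ is $\cF_t$-measurable, so $N_t=\xi=e^{-w\tau_a}\ve_{J(\tau_a)}$. To identify this with $M_t$, note that $X$ has no upward jumps, so $Y$ crosses $a$ continuously and $Y_{\tau_a}=a$. Hence $X(\tau_a)=\uX_{\tau_a}+a$. I then need the boundary value $\mF_a(u+a,u)=\mId$, which is part of the construction in Proposition~\ref{prop:ref.deterministic} (the terminal condition $\mF_{\mG}(v+a,v)=\mId$). With this, $M_t=e^{-w\tau_a}\ve_{J(\tau_a)}\mF_a(\uX_{\tau_a}+a,\uX_{\tau_a})=\xi$. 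The event $\{\tau_a=\infty\}$ needs no separate treatment: there $t<\tau_a$ for every $t$, which is the first case.

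Boundedness of $M$ follows from $0\le\mF_a\vi\le\vi$ together with nonnegativity of $\mF_a$, which comes from its probabilistic identification. Adaptedness is then automatic.

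The main obstacle is the application of the Markov property in the first case. The difficulty is that Theorem~\ref{thm:reflected} is stated only for starting points with $u\le y<u+a$, and the shifted process must be started from the pair (level, running minimum), not from the level alone. I would handle this by showing that the joint law of the future of $(X,\uX,J)$ after time $t$ depends only on $(X(t),\uX_t,J(t))$, using the minimum-update identity above. A smaller point concerns the entrance states $y=u$ (the diagonal), where $\mF_a(u,u)=\mH_a(u)$ is defined through the strict-time convention $\rhob$. These states are covered by the case $x=v$ of Theorem~\ref{thm:reflected}, so they cause no inconsistency.
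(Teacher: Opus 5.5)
Your proposal is correct and follows essentially the paper's argument: you identify $M$ with the Doob martingale of $\xi=e^{-w\tau_a}\ve_{J(\tau_a)}\indF{\tau_a<\infty}$, using Theorem~\ref{thm:reflected} applied from $(X(t),\uX_t,J(t))$ on $\{t<\tau_a\}$, and using continuous passage ($X(\tau_a)=\uX_{\tau_a}+a$) together with $\mF_a(u+a,u)=\mId$ on $\{\tau_a\le t\}$. The only cosmetic difference is that you condition on $\cF_t$ via the simple Markov property of $(X,\uX,J)$ at fixed $t$, whereas the paper conditions on $\cF_{t\wedge\tau_a}$ via the strong Markov property at $t\wedge\tau_a$.
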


\begin{proof}
Define the bounded terminal row vector $\Xi=e^{-w\tau_a}\ve_{J(\tau_a)}\indF{\tau_a<\infty}$, then, by Theorem~\ref{thm:reflected} and the strong Markov property at $t\wedge\tau_a$, we have that $M_t=\bE_{x,i}[\Xi\mid\cF_{t\wedge\tau_a}]$.
On $\{\tau_a\le t\}$, passage through $a$ is continuous from below.
Hence $X(\tau_a)=\uX_{\tau_a}+a$ and $\mF_a(X(\tau_a),\uX_{\tau_a})=\mId$.
On $\{t<\tau_a\}$, the identity is Theorem~\ref{thm:reflected} applied from $(X(t),\uX_t,J(t))$.
Thus $M$ is the Doob martingale of $\Xi$.
\end{proof}

\begin{remark}[Relation with the scalar proof of \cite{vanderhofstad2023}]
\label{rem:ref.scalar}\rm 
The martingale \eqref{eq:ref.M} appears in the proof of Theorem~3.4 of \cite{vanderhofstad2023}.
We note that an issue arises in that proof when the Dynkin-Lebesgue-Stieltjes formula is applied to $f(y,z)=F(w;y,a(w,c,z),z)$, since with the extension $F(w;y,a,z)=1$ for $y\le z$, the required differentiability with respect to $z$ does not hold along the diagonal $y=z$.
Consequently, the hypotheses of the formula are not satisfied at that point, so the subsequent martingale conclusion does not follow from this application.
Here we follow a different approach to the reflected first passage problem, which yields the correct transform, including in the scalar case $m=1$.
\end{remark}

The same construction gives the following deterministic approximation.

\begin{theorem}[Constructive approximation]\label{thm:ref.certified}
Fix a compact computational range $v\in[v_0,v_1]$ with $v_0>0$ and let $\beta$ be given by \eqref{eq:ref.beta}.
Then the transform in \eqref{eq:ref.answer} can be approximated by a deterministic sequence of finite quadratures with explicit error propagation. 
\begin{enumerate}[label=(\roman*)]
\item Start from $\mH_a^{(0)}=0$ and use \eqref{eq:ref.T.det} to compute $\mH_a^{(n+1)}=\cT_a\mH_a^{(n)}$.
One exact Picard step acts on the whole matrix function of $v$; at each fixed $v$ its evaluation uses only $\cD_0,\ldots,\cD_{k_a^*(v)}$.
\item The exact Picard error satisfies $\|\mH_a-\mH_a^{(n)}\|_\infty\le\beta^n$.
\item If the numerical realization of the $k$-th Picard step has certified sup-norm error at most $\delta_k$, including interpolation and quadrature errors, and $\widehat{\mH}_a^{(0)}=0$, then
\begin{equation}\label{eq:ref.num.error}
\|\widehat{\mH}_a^{(n)}-\mH_a\|_\infty\le\beta^n+\sum_{k=0}^{n-1}\beta^{n-1-k}\delta_k.
\end{equation}
\item Once $\widehat{\mH}_a$ is available, compute $\widehat{\mF}_a(x,u)$ from \eqref{eq:ref.FG} by the same finite method of steps.
If this final calculation has error at most $\eta$, then
$$
\|\widehat{\mF}_a(x,u)-\mF_a(x,u)\|\le\|\widehat{\mH}_a-\mH_a\|_\infty+\eta.
$$
\end{enumerate}
Thus the matrix Laplace transform is obtained by a deterministic sequence of finite quadratures with a prescribed total error.
\end{theorem}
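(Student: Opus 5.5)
The plan is to treat Theorem~\ref{thm:ref.certified} as a bookkeeping consequence of Lemma~\ref{lem:ref.contraction}, Theorem~\ref{thm:ref.restart} and Proposition~\ref{prop:ref.deterministic}. The work consists of two error-propagation inequalities for the affine contraction $\cT_a$, plus a stability estimate for the final evaluation of $\mF_a$.

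For (i), Proposition~\ref{prop:ref.deterministic} says $(\cT_a\mG)(v)$ equals $\mK(v+a,v)^{-1}(\mId+(\cJ\mG)(v+a,v))$. Here $\mK(\cdot,v)$ and $(\cJ\mG)(\cdot,v)$ are produced by the method of steps over $\cD_0,\ldots,\cD_{k_a^*(v)}$. I will first check that $k_a^*(v)$ is finite and uniform on $[v_0,v_1]$. Indeed, $k_a^*(v)$ is the least $k$ with $\bar p^{-(k+1)}\ge 1+a/v$, and this is bounded by its value at $v_0$. On each region, \eqref{eq:ref.K}--\eqref{eq:ref.J} are linear ODEs in $x$ driven by already computed data, so each step is a finite quadrature. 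Part (ii) is exactly the last statement of Theorem~\ref{thm:ref.restart}.

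For (iii), write $\widehat{\mH}_a^{(k+1)}=\cT_a\widehat{\mH}_a^{(k)}+\mEps_k$ with $\|\mEps_k\|_\infty\le\delta_k$, and set $e_k=\|\widehat{\mH}_a^{(k)}-\mH_a\|_\infty$. Because $\mH_a=\cT_a\mH_a$ and Lemma~\ref{lem:ref.contraction} holds, we get $e_{k+1}\le\beta e_k+\delta_k$. Starting from $e_0=\|\mH_a\|_\infty\le1$, since $0\le\mH_a\vi\le\vi$, induction gives \eqref{eq:ref.num.error}.

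For (iv), I want the map $\mG\mapsto\mF_{\mG}(x,u)$ to be $1$-Lipschitz in the sup row-sum norm. I will use the probabilistic reading behind Proposition~\ref{prop:ref.deterministic} and Theorem~\ref{thm:reflected}. By the strong Markov property at $\rho_u\wedge\tau_{\U{u+a}}$,
\[
\mF_{\mG}(x,u)_{ij}=\bE_{(x,u),i}\bigl[e^{-w\tau_{\U{u+a}}};\tau_{\U{u+a}}<\rho_u,J=j\bigr]+\bE_{(x,u),i}\bigl[e^{-w\rho_u}\mG_{J(\rho_u),j}(X(\rho_u));\rho_u<\tau_{\U{u+a}}\bigr].
\]
Therefore
\[
|\mF_{\mG_1}(x,u)-\mF_{\mG_2}(x,u)|\le\|\mG_1-\mG_2\|_\infty,
\]
and in fact with factor $\beta\le1$. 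Taking $\mG_1=\widehat{\mH}_a$ and $\mG_2=\mH_a$, and adding the numerical error $\eta$ of the final step by the triangle inequality, yields the bound in (iv).

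The main obstacle is justifying this probabilistic representation of $\mF_{\mG}$ for a general bounded $\mG$, not only at the fixed point. The proof of Proposition~\ref{prop:ref.deterministic} asserts it via the first-step decomposition and uniqueness by the method of steps. I would rely on that uniqueness, on each $\cD_k$ in turn, to identify the ODE solution with the expectation. A secondary issue is that interpolation errors in $\widehat{\mH}_a$ must be measured in the same sup-norm; I will simply fold them into $\delta_k$ and $\eta$, as the statement allows.
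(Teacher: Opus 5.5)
Your proposal is correct and follows the paper's proof essentially step for step. Parts (i)--(ii) are read off from Proposition~\ref{prop:ref.deterministic} and Theorem~\ref{thm:ref.restart}, (iii) comes from the contraction recursion $e_{k+1}\le\beta e_k+\delta_k$ with $e_0\le 1$, and (iv) from the fact that $\mG\mapsto\mF_{\mG}(x,u)$ is affine in the restart payoff with a discounted subprobability kernel, so it is $1$-Lipschitz in the sup row-sum norm. Your additions — the uniform bound $k_a^*(v)\le k_a^*(v_0)$ on $[v_0,v_1]$ and the sharper factor $\beta$ in (iv) — are correct refinements of that same argument.
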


\begin{proof}
The first two assertions are Proposition~\ref{prop:ref.deterministic} and Theorem~\ref{thm:ref.restart}.
If $e_n=\|\widehat{\mH}_a^{(n)}-\mH_a\|_\infty$, contraction gives $e_{n+1}\le\beta e_n+\delta_n$, which yields \eqref{eq:ref.num.error}.
Finally the continuation representation is affine in the restart payoff with a discounted subprobability kernel, so replacing $\mH_a$ by $\widehat{\mH}_a$ changes the exact continuation value by at most $\|\widehat{\mH}_a-\mH_a\|_\infty$; adding the final numerical error gives the last estimate.
\end{proof}

\begin{remark}[Implementation near zero]
\rm The exact Picard recursion is global and requires no boundary value at $v=0$.
Numerically, $k_a^*(v)\to\infty$ as $v\downarrow0$, so a finite implementation is performed on a prescribed positive range and with a finite representation of the $v$-dependence.
\end{remark}

\end{document}